
\documentclass[12pt]{amsart}
\usepackage{amssymb}

\textwidth144mm
\textheight240mm
\oddsidemargin7.5mm
\evensidemargin7.5mm
\topmargin-6mm
\parskip4pt plus2pt minus2pt
\parindent5mm


\newcommand{\eps}{\varepsilon}

\newcommand{\B}{{\mathcal B}}

\newcommand{\N}{{\mathbb N}}
\newcommand{\C}{{\mathbb C}}

\newcommand{\tef}{transcendental entire function}

\newcommand\sw{spider's web}


\theoremstyle{plain}
\newtheorem{theorem}{Theorem}[section]
\newtheorem{corollary}[theorem]{Corollary}

\newtheorem{lemma}[theorem]{Lemma}
\theoremstyle{definition}

\theoremstyle{remark}

\theoremstyle{problem}

\theoremstyle{example}

\begin{document}


\title[]{A sharp growth condition for a fast escaping spider's web}

\author{P. J. Rippon}
\address{Department of Mathematics and Statistics \\
The Open University \\
   Walton Hall\\
   Milton Keynes MK7 6AA\\
   UK}
\email{p.j.rippon@open.ac.uk}

\author{G. M. Stallard}
\address{Department of Mathematics and Statistics \\
The Open University \\
   Walton Hall\\
   Milton Keynes MK7 6AA\\
   UK}
\email{g.m.stallard@open.ac.uk}



\subjclass{30D05, 37F10}


\begin{abstract}
 We show that the fast escaping set $A(f)$ of a transcendental entire function $f$ has a structure known as a spider's web whenever the maximum modulus of $f$ grows below a certain rate. We give examples of entire functions for which the fast escaping set is not a spider's web which show that this growth rate is best possible. By our earlier results, these are the first examples for which the escaping set has a spider's web structure but the fast escaping set does not. These results give new insight into a conjecture of Baker and a conjecture of Eremenko.
\end{abstract}

\maketitle

\section{Introduction}
\setcounter{equation}{0}

Let $f:\C\to \C$ be a {\tef} and denote by
$f^{n},\,n=0,1,2,\ldots\,$, the $n$th iterate of~$f$. The {\it
Fatou set} $F(f)$ is the set of points $z \in \C$
such that $(f^{n})_{n \in \N}$ forms a normal
family in some neighborhood of $z$.  The complement of $F(f)$ is
called the {\it Julia set} $J(f)$ of $f$. An introduction to the
properties of these sets can be found in~\cite{wB93}.

In recent years, the escaping set defined by
\[
I(f) = \{z : f^n(z) \to \infty \mbox{ as } n \to \infty\}
\]
has come to play an increasingly significant role in the study of the iteration of {\tef}s with much of the research being motivated by a conjecture of Eremenko~\cite{E} that all the components of the escaping set are unbounded. For partial results on this conjecture see, for example,~\cite{lR07} and~\cite{RRRS}.

The most general result on Eremenko's conjecture was obtained in~\cite{RS05} where it was proved that the escaping set always has at least one unbounded component. This result was proved by considering the fast escaping set $A(f) = \bigcup_{n \in \N} f^{-n}(A_R(f))$, where
\[
A_R(f) = \{z:
            |f^n(z)| \geq M^n(R,f), \text{ for } n \in \N\}.
\]
Here
\[
 M(r) = M(r,f) = \max_{|z| = r} |f(z)|,
\]
$M^n(r,f)$ denotes the $n$th iterate of $M$ with respect to $r$, and $R>0$ is chosen so that $M(r,f) > r$ for $r\geq R$. The set $A(f)$ has many nice properties including the fact that all its components are unbounded -- these properties are described in detail in~\cite{RS09a}.

There are many classes of {\tef}s for which the fast escaping set has the structure of a spider's web -- see~\cite{RS09a},~\cite{MP11} and~\cite{Si}. We say that a set $E$ has this structure if $E$ is connected and there exists a sequence of bounded simply connected domains $G_n$ such that
\[
 \partial G_n \subset E, \; G_n \subset G_{n+1}, \mbox{ for } n \in \N, \mbox{ and } \bigcup_{n \in \N}G_n = \C.
\]
As shown in~\cite{RS09a}, if $A_R(f)$ has this structure then so do both $A(f)$ and $I(f)$, and hence Eremenko's conjecture is satisfied. Also, the domains $G_n$ can be chosen so that $\partial G_n \subset A_R(f) \cap J(f)$ and so $f$ has no unbounded Fatou components. This gives a surprising link between Eremenko's conjecture and a conjecture of Baker that all the components of the Fatou set are bounded if $f$ is a {\tef} of order less than $1/2$. Recall that the {\it order} of a {\tef} $f$ is defined to be
\[
 \rho = \limsup_{r \to \infty} \frac{\log \log M(r)}{\log r}.
\]

For background and recent results on Baker's conjecture, see~\cite{H},~\cite{HM},~\cite{RS08} and~\cite{RS11c}. It was shown in~\cite{RS08} (see also~\cite{RS09a}) that all earlier partial results on Baker's conjecture are in fact sufficient to imply the stronger result that $A_R(f)$ is a spider's web. Here we give a sharp condition on the growth of the maximum modulus that is sufficient to imply that $A_R(f)$ is a spider's web and hence that Baker's conjecture and Eremenko's conjecture are both satisfied. More precisely, we prove the following sufficient condition.

\begin{theorem}\label{cond}
Let $f$ be a transcendental entire function and let
$R>0$ be such that $M(r,f) > r$ for $r \geq R$. Let
\[
  R_n = M^n(R) \; \mbox{ and } \; \eps_n = \max_{R_n \leq r \leq R_{n+1}} \frac{\log \log M(r)}{\log
  r}.
\]
If
\[
  \sum_{n \in \N} \eps_n < \infty,
\]
then $A_R(f)$ is a spider's web.
\end{theorem}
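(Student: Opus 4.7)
The plan is to construct a sequence of bounded Jordan curves $\gamma_n$, each lying in $A_R(f)$ and surrounding the origin, with $\min_{z \in \gamma_n} |z| \to \infty$. The bounded complementary component $G_n$ of each $\gamma_n$ then provides a nested sequence of bounded simply connected domains with $\partial G_n \subset A_R(f)$ and $\bigcup G_n = \C$. Connectedness of $A_R(f)$ follows because all the curves $\gamma_n$ are pairwise nested and lie in $A_R(f)$, so $A_R(f)$ is a spider's web in the sense of the definition given in the excerpt.

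First, I unpack the hypothesis quantitatively. The definition of $\eps_n$ gives
\[
\log M(r) \leq r^{\eps_n} \qfor R_n \leq r \leq R_{n+1},
\]
and $\sum \eps_n < \infty$ forces both $\eps_n \to 0$ and (crucially) $\sum_{j \geq n}\eps_j \to 0$. On each annulus $\{R_n \leq |z| \leq R_{n+1}\}$, $f$ therefore behaves like a function of ``local order'' at most $\eps_n$, which becomes arbitrarily small as $n$ grows.

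Second, I apply a local cos-$\pi\rho$ type minimum-modulus estimate on each annulus, of the kind used in the work on Baker's conjecture cited in the introduction. Because the local order is $\eps_n$, such an estimate yields, on a subset of $[R_n,R_{n+1}]$ of almost full logarithmic measure, a radius $\rho_n$ at which
\[
\log m(\rho_n,f) \geq (1 - C\eps_n)\log M(\rho_n,f),
\]
with $C$ an absolute constant. On the circle $|z|=\rho_n$, the minimum modulus of $f$ is thus essentially as large as the maximum modulus. A standard degree argument (the image of $\{|z|=\rho_n\}$ winds round $0$ a positive number of times equal to the number of zeros of $f$ inside $|z|<\rho_n$) then extracts a Jordan curve $\gamma_n$ around $0$, contained in this good region, on which $|f(z)| \geq M(\rho_n)^{1-C\eps_n}$. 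Iterating the construction one scale at a time and tracking the accumulated exponent $\prod_{j\geq n}(1-C\eps_j)$, which remains bounded away from $0$ by summability, gives
\[
|f^k(z)| \geq R_{n+k}^{1 - C\sum_{j\geq n}\eps_j} \geq R_k \qfor k \geq 0,\ z \in \gamma_n,
\]
for all $n$ sufficiently large, whence $\gamma_n \subset A_R(f)$.

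The principal obstacle is the iteration in the final step. The cos-$\pi\rho$ estimate only gives control on a certain set of ``good'' radii, whereas under iteration one is forced to follow forward images $f^k(\gamma_n)$ of curves that one does not get to prescribe; these images must be shown to lie in the good regions at every subsequent scale. This is precisely where the summability hypothesis $\sum \eps_n < \infty$ — as opposed to the weaker $\eps_n \to 0$ — is essential, since it is the summability that bounds the cumulative multiplicative loss $\prod_j(1-C\eps_j)$ below by a positive constant. Matching this construction to one of the formal sufficient criteria for a spider's web proved in \cite{RS09a}, and handling the finite exceptional range where the asymptotic estimates may fail, are the remaining technical ingredients.
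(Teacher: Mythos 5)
Your outline has the right ingredients — a local $\cos \pi \rho$ estimate on each annulus, the observation that $\sum \eps_n < \infty$ bounds the cumulative exponent loss $\prod_j(1 - C\eps_j)$ away from $0$, and ultimately an appeal to a spider's web criterion from \cite{RS09a}. But you correctly flag "the principal obstacle" (tracking forward images $f^k(\gamma_n)$ of Jordan curves so they land in good regions at every scale) and then do not resolve it, and this is precisely where the paper's argument differs in an essential way. The paper does not track curves at all. It uses Lemma 2.2 of this paper (= Corollary 8.2 of \cite{RS09a}): $A_R(f)$ is a spider's web provided there is a sequence $(\rho_n)$ with $\rho_n > M^n(R)$ and $m(\rho_n) \geq \rho_{n+1}$. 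This criterion is purely about a single choice of good radius per scale; it never requires the image of the circle $\{|z|=\rho_n\}$ to land at a good radius, because the machinery in \cite{RS09a} handles the iteration internally. Once one takes this as the target, the proof is the self-contained computation: pick $N$ with $\sum_{n\geq N}\eps_n < 1/8$, set $r_n = M^{n+1}\bigl(R_{N+1}^{\prod_{m=N}^{N+n}(1-2\eps_m-1/(8m^2))}\bigr)$, and apply the local $\cos\pi\rho$ theorem together with the log-log convexity $M(r^c)\geq M(r)^c$ to produce $\rho_n \in (R_{N+n+1}, r_n)$ with $m(\rho_n) \geq r_{n+1} > \rho_{n+1}$. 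Because the exponent product stays above $1/2$, the $\rho_n$ remain above $M^n(R)$, and Lemma 2.2 (plus the invariance Lemma 2.3 under changing $R$) finishes the proof. So while your intuition about where the summability enters is exactly right, the degree-argument/Jordan-curve route is not what closes the argument, and the honest acknowledgment that you cannot control the images is a genuine gap in the proposal; without Lemma 2.2 (or a substitute for it) the proof does not go through.

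A secondary point: the local $\cos\pi\rho$ estimate you quote, $\log m(\rho_n) \geq (1-C\eps_n)\log M(\rho_n)$, is not quite the form the paper proves or needs. The paper's Theorem 2.1 (derived from Beurling's inequality) gives $t\in(r^{1-2\alpha},r)$ with $\log m(t) > \log M(r^{1-2\alpha}) - 2$, which compares $m$ at a (not freely chosen) radius against $M$ at the \emph{smaller} radius $r^{1-2\alpha}$. One then needs Lemma 2.4, $M(r^c)\geq M(r)^c$, to turn this into the recursive bound $m(\rho_n)\geq r_{n+1}$. That extra convexity step is part of why the product of exponents, rather than the cruder $1-C\eps_n$ per scale, appears in the actual proof.
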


We obtained a closely related result in~\cite[Theorem 3]{RS08} with the stronger hypothesis that $\sum_{n \in \N} \sqrt{\eps_n} < \infty$ and remarked there that the square root could be removed by introducing a more sophisticated argument. The method of proof given here is quite different, and more enlightening, than that used to prove~\cite[Theorem 3]{RS08}. In fact, Theorem~\ref{cond} follows surprisingly easily from a new local version of the classical $\cos \pi \rho$ theorem; see Theorem~\ref{cos}.

{\it Remark.} Theorem~\ref{cond}, can be generalised to apply to the set of points that escape as fast as possible within a direct tract of a transcendental meromorphic function; see~\cite{BRS} for earlier results concerning the fast escaping set in a direct tract.

It turns out that the condition in Theorem~\ref{cond} is, in a strong sense, best possible. In particular, the following result shows that the condition in Theorem~\ref{cond} cannot be replaced by the weaker condition that $\sum_{n \in \N} (\eps_n)^c < \infty$, for some $c>1$.

\begin{theorem}\label{ex}
There exist {\tef}s of the form
\begin{equation}\label{sym}
f(z)=  z^3 \prod_{n=1}^{\infty}\left(1+\frac{z}{a_n}\right)^{2p_n},
\end{equation}
where $p_n\in \N$, for $n\in \N$, and the sequence $(a_n)$ is positive and strictly increasing such that $A(f) \cap (-\infty,0] = \emptyset$; in particular, $A(f)$ is not a spider's web.

Moreover, if $(\delta_n)$ is a positive sequence such that
\[
  \sum_{n \in \N} \delta_n = \infty,
\]
then we can choose the sequence $(a_n)_{n \in \N}$ and a value $R>0$ in such a way that, with
\[
p_n = [a_n^{\delta_n/4}/4],\; \; R_n = M^n(R) \; \;  \mbox{ and } \; \;  \eps_n =
  \max_{R_n \leq r \leq R_{n+1}} \frac{\log \log M(r)}{\log r},
\]
there exists a subsequence $(n_k)$ such that
\begin{equation}\label{eps1}
 \eps_{n_k} \leq \delta_k + \frac{1}{2^{n_k}}, \;  \mbox{ for } k \in \N,
  \end{equation}
   and
 \begin{equation}\label{eps2}
  \eps_{n_k + m} \leq \frac{\delta_k}{3^{m-1}} + \frac{1}{2^{n_k+m}}, \;  \mbox{ for } k \in \N, 1 \leq m < n_{k+1} - n_k.
\end{equation}
\end{theorem}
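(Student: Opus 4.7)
The plan is to build the entire function $f$ of the form \eqref{sym} inductively, then verify $A(f)\cap(-\infty,0]=\emptyset$ (which immediately excludes $A(f)$ from being a spider's web, since such a structure would force continua encircling $0$ to intersect the negative real axis), and finally to establish \eqref{eps1} and \eqref{eps2}. For the construction I would choose the $a_n$ recursively: having fixed $a_1,\dots,a_n$ (and hence $p_1,\dots,p_n$ via the given formula), which determine a partial polynomial $f_n(z)=z^3\prod_{k=1}^n(1+z/a_k)^{2p_k}$, I would pick $a_{n+1}$ so enormous compared with $f_n^{n+1}(R)$ that the tail $\prod_{k>n}(1+z/a_k)^{2p_k}$ differs from $1$ by less than $2^{-n-k}$ on the disk $\{|z|\le f_n^{n+1}(R)\}$. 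This guarantees absolute convergence of the product (so $f$ is entire), ensures that $R_k=M^k(R,f)$ agrees with $f_n^k(R)$ to within a factor $1+2^{-n-k}$ for $k\le n+1$, and lets me position $R_n$ relative to the $a_k$ along the subsequence $(n_k)$ to match the target estimates.

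The key structural observation is that $f$ has non-negative Taylor coefficients, so $M(r,f)=f(r)$ for $r\ge 0$; and since $z^3$ has odd multiplicity while each $(1+z/a_n)^{2p_n}$ has even multiplicity, $f(-r)\le 0$ for $r\ge 0$, and so $f$ maps $(-\infty,0]$ into itself. Writing $g(r)=|f(-r)|$, the ratio $g(r)/M(r,f)=\prod(|1-r/a_n|/(1+r/a_n))^{2p_n}$ is strictly less than $1$ for $r>0$ and becomes super-polynomially small whenever $r$ is close to some $a_n$, thanks to the enormous multiplicity $p_n=[a_n^{\delta_n/4}/4]$. Moreover $0$ is a super-attracting fixed point of $f$ (since $f(z)\sim cz^3$ near $0$), and each critical point $-a_n$ maps to $0$ and hence lies in the immediate basin of $0$.

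To deduce $A(f)\cap(-\infty,0]=\emptyset$ I would combine these ingredients to argue that every point of $(-\infty,0]$ either belongs to the basin of $0$ (so its orbit converges to $0$) or has a bounded forward orbit. Either way, $\liminf_n|f^n(-r_0)|$ is finite for every $r_0>0$, and since $R_{n-L}\to\infty$ as $n\to\infty$ for every fixed shift $L\ge 0$, this forces $|f^n(-r_0)|<R_{n-L}$ at infinitely many $n$, ruling out $-r_0\in A(f)$. The main obstacle is precisely this dynamical claim: one must treat \emph{every} orbit on the negative real axis, not just those that happen to land on some $a_n$, and the delicate quantitative choice of $a_n$ in the induction is designed so that the strong suppression near each $a_n$ overwhelms the growth elsewhere and drives all orbits into the super-attracting basin of $0$.

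For \eqref{eps1} and \eqref{eps2} I would expand $\log M(r,f)/\log r$ and identify the dominant factor on each interval $[R_n,R_{n+1}]$. On $[R_{n_k},R_{n_k+1}]$ the dominant contribution to $\log\log M(r,f)/\log r$ comes from the factor corresponding to the zero that $R_{n_k}$ has just passed, and the scaling $p_k=[a_k^{\delta_k/4}/4]$ produces the bound $\delta_k+1/2^{n_k}$, the additive $1/2^{n_k}$ absorbing the tail error built into the construction. For $m\ge 1$, the iterate $R_{n_k+m}$ has moved past the relevant $a_k$ by a factor that at least triples per step, so the relative weight of the $k$-th factor in $\log M(r,f)$ shrinks geometrically, producing the decay $\delta_k/3^{m-1}$ in \eqref{eps2}.
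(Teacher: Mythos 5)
Your high-level plan (choose $a_n$ inductively, show $A(f)\cap(-\infty,0]=\emptyset$, then verify the $\eps$ estimates) matches the paper, your deduction that $A(f)\cap(-\infty,0]=\emptyset$ rules out a spider's web is correct, and your heuristics for \eqref{eps1} and \eqref{eps2} point in the right direction. However, your mechanism for proving $A(f)\cap(-\infty,0]=\emptyset$ has a genuine gap. You claim that every orbit starting on $(-\infty,0]$ either lies in the basin of $0$ or stays bounded, so that $\liminf_n|f^n(-r_0)|<\infty$ for every $r_0>0$. If true, this would yield the far stronger conclusion $I(f)\cap(-\infty,0]=\emptyset$, which the construction neither provides nor needs, and which you have not justified: nothing prevents an orbit which repeatedly lands just beyond the zeros $-a_n$ from escaping to infinity, since $|f(-r)|\approx g(r)\ge r^3$ whenever $-r$ is not near a zero. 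The negative axis is invariant, but not trapped.

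The paper's actual argument is more delicate and only shows that orbits on $(-\infty,0]$, while allowed to escape, escape strictly more slowly than the iterated maximum modulus, hence lie outside $A(f)$. The key comparison is between $r_n=g^n(10)$, where $g(r)=r^3\prod_{a_n\le r}(1+r/a_n)^{2p_n}$ is a one-sided, jump-discontinuous surrogate for $M(r)$ satisfying $m(r)<g(r)<M(r)$, and a sharp upper bound $s_k$ on the $k$th image of an interval $(-r_{m+1},0]$. When $-s_k$ is placed at a zero of $f$, the suppression $|f(z)|<1$ for $z\in(-a_n,-a_n^{1-\delta_n/16})$ allows the gain $s_{k+1}=g\bigl(s_k^{1-\delta_n/16}\bigr)$; otherwise $s_{k+1}=g(s_k)$. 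The ratio $T_k=\log s_k/\log r_{m+k}$ then contracts by a factor $(1-\delta_n/16)$ at each zero crossing, up to small errors controlled by a convexity estimate for $\log g$ and an upper growth bound for $g$ on zero-free intervals, and because $\sum\delta_n=\infty$ the accumulated contractions force $T_K\le 1$ for some finite $K$: the orbit has fallen one iterate behind $r_{m+K}$. Repeating this for longer and longer initial blocks shows that for every $z\in(-\infty,0]$ and every time shift $k$ there exists $n$ with $|f^{n+k}(z)|<M^n(R)$, which is exactly what is needed to conclude $z\notin A(f)$. Without this ``falling behind'' comparison (rather than a boundedness claim), the argument does not close.
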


Since it is possible to choose a positive sequence $(\delta_n)$ with
\[
\sum_{n \in \N} \delta_n = \infty \mbox{ and } \lim_{n \to \infty} \delta_n = 0,
 \]
 Theorem~\ref{ex} implies that there are functions of order zero for which $A_R(f)$ fails to be a spider's web. Thus new techniques are needed in order to solve Baker's conjecture. One such technique is introduced in~\cite{RS11c} where we show that all functions of order less than $1/2$ with zeros on the negative real axis satisfy Baker's conjecture and also satisfy Eremenko's conjecture with $I(f)$ being a spider's web. Since functions of the form~\eqref{sym} with $\limsup_{n \to \infty} \eps_n < 1/2$ are of this type, this gives the following corollary to Theorem~\ref{ex}, which answers a question in~\cite{RS09a}.

\begin{corollary}\label{SW}
There exist {\tef}s for which $I(f)$ is a spider's web but $A(f)$ is not a spider's web.
\end{corollary}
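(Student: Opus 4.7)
The plan is to deduce the corollary from Theorem~\ref{ex} together with the result of~\cite{RS11c} quoted in the paragraph above its statement. First, I would choose a positive sequence $(\delta_n)$ satisfying both $\sum_{n\in\N}\delta_n=\infty$ and $\delta_n\to 0$, for instance $\delta_n=1/n$. Applying Theorem~\ref{ex} to this choice produces a {\tef} $f$ of the form~\eqref{sym} for which $A(f)\cap(-\infty,0]=\emptyset$ and for which the sequence $(\eps_n)$ is controlled by the bounds~\eqref{eps1} and~\eqref{eps2}.

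Next I would observe that $A(f)$ cannot be a {\sw}. Indeed, by the definition of a {\sw} $E$, each $\partial G_n\subset E$ is the boundary of a bounded simply connected domain, and since $\bigcup_{n}G_n=\C$ the curve $\partial G_n$ separates the origin from infinity once $n$ is large, so it must meet every ray from the origin; in particular it must meet the negative real axis. Since $A(f)\cap(-\infty,0]=\emptyset$, this rules out $A(f)$ being a {\sw}.

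It then remains to check that $I(f)$ is a {\sw}. For this I would verify the hypotheses of the result of~\cite{RS11c}, namely that $f$ has order less than $1/2$ and has zeros on the negative real axis. The latter is immediate from~\eqref{sym}, since $f(-a_n)=0$ for all $n$. For the former I would argue that $\eps_n\to 0$: the bound~\eqref{eps1} gives $\eps_{n_k}\to 0$ along the subsequence $(n_k)$ because $\delta_k\to 0$, while for an intermediate index $n=n_k+m$ with $1\leq m<n_{k+1}-n_k$, the bound~\eqref{eps2} yields $\eps_n\leq \delta_k+2^{-n}$, which also tends to $0$ as $n\to\infty$ since $k\to\infty$ in this regime. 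Because the order of $f$ coincides with $\limsup_{n\to\infty}\eps_n$, this gives $\rho(f)=0<1/2$, as required.

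The argument is thus largely a matter of assembling already-proved pieces; the only point calling for any care is the simultaneous arrangement $\sum\delta_n=\infty$ and $\delta_n\to 0$, which is needed to trigger Theorem~\ref{ex} while ensuring order zero so that the theorem of~\cite{RS11c} can be applied. No step appears to present a substantial obstacle.
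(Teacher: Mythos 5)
Your argument is correct and follows essentially the same route as the paper: pick $(\delta_n)$ with $\sum\delta_n=\infty$ and $\delta_n\to0$, apply Theorem~\ref{ex} to obtain $f$ with $A(f)\cap(-\infty,0]=\emptyset$ (hence $A(f)$ not a spider's web), then deduce from~\eqref{eps1} and~\eqref{eps2} that $\limsup_n\eps_n=0$ so that $f$ has order less than $1/2$ with negative zeros and the result of~\cite{RS11c} gives that $I(f)$ is a spider's web. Your write-up merely spells out in more detail two points the paper leaves implicit (that the spider's-web property forces $A(f)$ to meet $(-\infty,0]$, and that the two bounds together yield $\eps_n\to0$), but the underlying proof is the same.
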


{\it Remark.} In fact we show in~\cite{RS11c} that functions of order less than $1/2$ with zeros on the negative real axis have the stronger property that $Q(f)$ contains a spider's web, where $Q(f)$ is the quite fast escaping set. Thus Theorem~\ref{ex} provides examples of functions for which $Q(f) \neq A(f)$; these two sets are equal for many functions, including all functions in the Eremenko-Lyubich class $\B$ as we show in~\cite{RS11a}.

The paper is arranged as follows. In Section 2 we prove Theorem~\ref{cond} and then, in Section~3, we prove Theorem~\ref{ex}.

\section{Proof of Theorem~\ref{cond}}
\setcounter{equation}{0}

Let $f$ be a {\tef} and $R>0$ be such that $M(r) > r$ for $r \geq R$. Recall that
\[
A_R(f) = \{z: |f^n(z)| \geq M^n(R), \mbox{ for } n \in \N\}
\]
and that $A_R(f)$ is a spider's web if $A_R(f)$ is connected and there exists
a sequence of bounded simply connected domains $G_n$ such that
\[
 \partial G_n \subset A_R(f), \; \; G_n \subset G_{n+1}, \mbox{ for } n \in \N, \; \; \mbox{ and } \; \; \bigcup_{n \in \N}G_n = \C.
\]
In this section we prove Theorem~\ref{cond} which gives a condition that is sufficient to ensure that $A_R(f)$ is a spider's web. The key ingredient in our proof is the following result which can be viewed as a local version of the classical cos $\pi \rho$ theorem. For a discussion of results of this type, see~\cite{RS12}.

\begin{theorem}\label{cos}
Let $f$ be a {\tef}. There exists $r(f)>0$ such that, if
\begin{equation}\label{growth}
 \log M(r) \leq r^{\alpha} \; \;  \mbox{ and } \; \; r^{1 - 2\alpha} \geq r(f),
\end{equation}
for some $\alpha \in (0,1/2)$,
then there exists $t \in (r^{1-2\alpha},r)$ such that
\[
\log m(t) >  \log M(r^{1-2\alpha}) - 2.
\]
\end{theorem}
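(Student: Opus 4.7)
The plan is to apply the Poisson--Jensen formula on the disk $\{|z|<r\}$ and to balance the Poisson integral against the sum of Green's-function contributions from the zeros of $f$ in that disk. Set $s:=r^{1-2\alpha}$; the goal is to find $t\in(s,r)$ on which the minimum modulus is close to $\log M(s)$.

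First I would count zeros using Jensen's formula. The hypothesis $\log M(r)\le r^\alpha$ gives
\[
\sum_{|a_j|<r/e}\log(r/|a_j|)\le \log M(r)-\log|f(0)|\le r^\alpha+O(1),
\]
and a routine adjustment of the radius yields that the total number $N$ of zeros of $f$ in $\{|z|<r\}$ is $O(r^\alpha)$. A sharper form of the same estimate gives that the number of zeros inside $\{|z|\le s\}$ is at most $r^\alpha/(2\alpha\log r)$, which is negligible compared with $N$ itself.

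Next I would invoke a Cartan-type estimate on the polynomial $\prod_{|a_j|<r}(z-a_j)$, projected onto the radial direction. For a suitable choice of Cartan threshold, there is an exceptional set $E\subset(s,r)$ of linear measure strictly less than $r-s$ such that, for every $t\in(s,r)\setminus E$,
\[
\sum_{|a_j|<r}G_r(te^{i\theta},a_j)\le 1\qquad\text{for all }\theta\in[0,2\pi),
\]
where $G_r(z,a)=\log(|r^2-\bar a z|/(r|z-a|))\ge 0$ is the Green's function of the disk of radius $r$. The Cartan side is manageable because the available radial interval $(s,r)$ has length of order $r$, which is much larger than $N$ times the natural Cartan threshold.

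For the Poisson integral
\[
I(z):=\int_0^{2\pi}P_r(z,re^{i\phi})\log|f(re^{i\phi})|\,\frac{d\phi}{2\pi},
\]
I would show that, for $|z|=t\in(s,r)\setminus E$, one has $I(z)\ge \log M(s)-1$. The key input is that the mean value of $\log|f|$ on the circle $|z|=t$ equals $\log|f(0)|+\sum_{|a_j|<t}\log(t/|a_j|)$ by Jensen, which in our regime is close to $\log M(s)$ because $n(s)=o(N)$ forces the Jensen sum to be dominated by contributions from zeros in the annulus $\{s<|a_j|<t\}$, whose total weight is pinned by the slow-growth hypothesis. Combining this with the explicit form of the Poisson kernel (which is close to $1$ when $t$ is not too close to $r$) and with Jensen's control of $\int\log^-|f|$ on $|z|=r$ gives the desired bound. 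Substituting back into the Poisson--Jensen formula yields, for a suitable $t\in(s,r)$,
\[
\log|f(te^{i\theta})|=I(te^{i\theta})-\sum_j G_r(te^{i\theta},a_j)\ge \log M(s)-2
\]
uniformly in $\theta$, whence $\log m(t)>\log M(s)-2$ as required.

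The main obstacle will be the lower bound on the Poisson integral: the naive estimate $I(z)\ge -O(\log M(r))=-O(r^\alpha)$ is too weak by the full factor $r^\alpha$, and achieving the absolute constant $-1$ is what drives the precise choice $s=r^{1-2\alpha}$ (which provides slack of size $2\alpha\log r$ in logarithmic scale and is nondegenerate only when $\alpha<1/2$), together with the auxiliary condition $r^{1-2\alpha}\ge r(f)$ to absorb $f$-dependent initial constants such as $|\log|f(0)||$.
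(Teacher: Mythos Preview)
Your approach has two genuine gaps, and the difficulty is precisely the one you flag at the end: obtaining absolute constants $1$ and $2$ rather than losses of order $r^\alpha$.

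\textbf{The Cartan step.} For $|z|=t$ with $t\in(s,r)$ and $|a_j|<r$, each Green's function term $G_r(te^{i\theta},a_j)$ is typically of order $\log(r/t)$ or larger (for instance, if $|a_j|\ll t$ then $|z-a_j|\asymp t$ and $|r^2-\bar a_j z|\asymp r^2$, giving $G_r\asymp\log(r/t)$). Summing $N\asymp r^\alpha$ such terms gives a contribution of order $r^\alpha$, not $\le 1$. Cartan's lemma lets you avoid radii $t$ near the $|a_j|$, which prevents any single term from blowing up, but it cannot reduce the aggregate sum below its typical size $\Theta(r^\alpha)$. So the claimed bound $\sum_j G_r(te^{i\theta},a_j)\le 1$ uniformly in $\theta$ is not attainable by Cartan.

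\textbf{The Poisson integral step.} The claim $I(z)\ge\log M(s)-1$ for all $|z|=t$ is equally problematic. You observe that the Jensen mean on $|z|=t$ equals $\log|f(0)|+N(t)$, but nothing forces this quantity to be near $\log M(s)$; in general $N(t)\le\log M(t)+O(1)$ with no comparable lower bound in terms of $\log M(s)$. Even knowing $I(z_0)\ge\log M(s)$ at the single point $z_0$ on $|z|=s$ where $|f|=M(s)$ gives no pointwise lower bound on the harmonic function $I$ over the larger circle $|z|=t$. The Harnack-type estimates you would need go in the wrong direction and introduce factors $(r+t)/(r-t)$ multiplying quantities of size $r^\alpha$.

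The paper sidesteps both issues by invoking a single inequality of Beurling: if $E=\{t\in(r_1,r_2):m(t)\le\mu\}$ with $0<\mu<M(r_1)$, then
\[
\log\frac{M(r_2)}{\mu}>\tfrac12\exp\!\left(\tfrac12\int_E\frac{dt}{t}\right)\log\frac{M(r_1)}{\mu}.
\]
Taking $r_1=r^{1-2\alpha}$, $r_2=r$, $\mu=M(r^{1-2\alpha})/e^2$ and assuming $E=(r^{1-2\alpha},r)$ gives $\int_E dt/t=2\alpha\log r$, so the right-hand side equals $r^\alpha$, contradicting $\log M(r)\le r^\alpha$. Beurling's inequality encodes an extremal-length/harmonic-measure argument that directly produces the sharp exponential gain; it is exactly the tool that replaces the delicate balancing you are attempting, and the specific exponent $1-2\alpha$ falls out of the computation $\tfrac12\exp(\tfrac12\cdot 2\alpha\log r)=r^\alpha$ rather than from Poisson--Jensen bookkeeping.
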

\begin{proof}
We apply the following result of Beurling \cite[page~96]{aB33}:

Let $f$ be analytic in $\{z:|z| < r_0\}$, let $0\le r_1<r_2< r_0$, and put
\[
E=\{t\in (r_1,r_2):m(t)\le \mu\}, \;\;\text{where } 0<\mu<M(r_1).
\]
Then
\begin{equation}\label{estimate}
\log \frac{M(r_2)}{\mu}> \frac12\exp\left(\frac12\int_E \frac{dt}{t}\right)\log \frac{M(r_1)}{\mu}.
\end{equation}

Taking $r_2=r$, $r_1=r^{1-2\alpha}$, $\mu = M(r^{1-2\alpha})/e^2$, and $r(f)>0$ such that $M(r(f)) \geq e^2$, we deduce from~\eqref{growth} and~\eqref{estimate} that, if $m(t) \leq \mu$ for $t \in (r^{1-2\alpha},r)$, then
\[
r^{\alpha} \geq \log M(r) \geq \log \frac{M(r)}{\mu}> \frac12\exp\left(\frac12\int_{r^{1-2\alpha}}^r \frac{dt}{t}\right)\log \frac{M(r^{1-2\alpha})}{\mu} = r^\alpha.
\]
This is a contradiction and so there must exist $t \in (r^{1-2\alpha},r)$ such that $m(t) > \mu$; that is,
\[
\log m(t) > \log \mu = \log M(r^{1-2\alpha}) - 2,
\]
as required.\end{proof}

We also use the following results about spiders' webs proved in~\cite{RS09a}.


\begin{lemma}\label{m(r)}~\cite[Corollary 8.2]{RS09a}
Let $f$ be a {\tef} and let $R>0$ be such that $M(r) > r$ for $r\geq R$. Then $A_R(f)$ is a {\sw} if there exists a sequence $(\rho_n)$ such that, for $n \geq 0$,
\begin{equation}\label{r1}
\rho_n > M^n(R)
\end{equation}
and
\begin{equation}\label{r2}
m(\rho_n) \geq \rho_{n+1}.
\end{equation}
\end{lemma}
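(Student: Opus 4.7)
The plan is to construct, for each $n \geq 0$, a bounded simply connected domain $G_n$ containing the origin with $G_n \subset G_{n+1}$, $\bigcup_n G_n = \C$, and $\partial G_n \subset A_R(f)$; the connectedness of $A_R(f)$ then follows from the fact that the nested boundaries form nested continua inside $A_R(f)$.

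For the construction, for each $n \geq 0$ let $W_n$ denote the connected component of the open set $\{z \in \C : |f(z)| < \rho_{n+1}\}$ that contains $0$; this is nonempty since $|f(0)| \leq M(R) < \rho_{n+1}$. The hypothesis $m(\rho_n) \geq \rho_{n+1}$ forces $|f(z)| \geq \rho_{n+1}$ on $\{|z|=\rho_n\}$, so $W_n$ is disjoint from this circle and hence contained in $\{|z|<\rho_n\}$; in particular $W_n$ is bounded. Let $U_n$ be the unbounded component of $\C \setminus W_n$ and set $G_n := \C \setminus U_n$: this ``fills in'' the holes of $W_n$, producing a bounded open set whose complement in the Riemann sphere is connected, hence simply connected. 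By construction $\partial G_n \subset \partial W_n \subset \{|f| = \rho_{n+1}\}$. The nesting $G_n \subset G_{n+1}$ follows from $W_n \subset W_{n+1}$, and the union exhausts $\C$ because $\rho_n \to \infty$ and the $0$-component of $\{|f|<\rho_{n+1}\}$ eventually contains any given compact set.

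The crux is verifying $\partial G_n \subset A_R(f)$. For $z \in \partial G_n$ we have $|f(z)| = \rho_{n+1}$, and from $M(|z|) \geq |f(z)|$ we obtain $|z| \geq M^{-1}(\rho_{n+1}) > M^n(R) \geq R$, which settles the $k=0$ and $k=1$ requirements. Since $f(z)$ then lies on the circle $\{|w|=\rho_{n+1}\}$, we also get $|f^2(z)| \geq m(\rho_{n+1}) \geq \rho_{n+2} > M^{n+2}(R)$, handling $k=2$. For $k \geq 3$ the naive estimate breaks, because $f^2(z)$ is a single point of modulus at least $\rho_{n+2}$ not lying on any fixed circle, and $m$ is not monotonic. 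To circumvent this, I would refine the definition of $G_n$ to be the (filled-in) connected component of $0$ in $f^{-1}(G_{n+1})$, constructed by backward iteration: fix large $N$, set $G_N^{(N)} := \{|z|<\rho_N\}$, and define $G_n^{(N)}$ inductively as the $0$-component of $f^{-1}(G_{n+1}^{(N)})$, filled in for simple-connectedness. Because $G_{n+1}^{(N)} \subset \{|z|<\rho_{n+1}\}$, the preimage avoids the circle $\{|z|=\rho_n\}$ and so $G_n^{(N)}$ is bounded in $\{|z|<\rho_n\}$. The map $f : G_n^{(N)} \to G_{n+1}^{(N)}$ is then proper, so $f(\partial G_n^{(N)}) \subset \partial G_{n+1}^{(N)}$, and by induction along the chain we reach $\partial G_N^{(N)} \subset \{|w|=\rho_N\}$; tracking minimum moduli along this chain of boundaries gives $|f^k(z)| > M^{n+k}(R) \geq M^k(R)$ for $0 \leq k \leq N-n$. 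Passing to the limit $N \to \infty$ yields the desired $G_n$ with $\partial G_n \subset A_R(f)$.

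The main obstacle is performing this pullback construction consistently: ensuring that the filled-in preimages remain bounded simply connected domains at every stage, that the filling-in procedure is compatible with the proper-map property needed to propagate $f(\partial G_n) \subset \partial G_{n+1}$, and that the limit in $N$ produces a nested family whose union is $\C$. Once this topological bookkeeping is done, the chain of radial estimates furnished by the iterated hypothesis $m(\rho_j) \geq \rho_{j+1}$ closes the induction cleanly.
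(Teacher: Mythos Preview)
The paper does not prove this lemma; it is quoted verbatim from \cite[Corollary~8.2]{RS09a} and used as a black box. So there is no ``paper's own proof'' to compare against here.

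Your sketch heads in the right direction and mirrors the strategy of \cite{RS09a}: build bounded simply connected domains by pulling back large disks along $f$, using $m(\rho_n)\ge\rho_{n+1}$ to confine each pullback to $\{|z|<\rho_n\}$, and then read off the inequalities $|f^k(z)|>M^{n+k}(R)$ on the resulting boundaries by inverting through the monotone function $M$. The chain $G_N^{(N)}=\{|z|<\rho_N\}$, $G_n^{(N)}$ the filled $0$-component of $f^{-1}(G_{n+1}^{(N)})$, does give $f(\partial G_n^{(N)})\subset\partial G_{n+1}^{(N)}$ and hence $|f^{N-n}(z)|=\rho_N>M^N(R)$ for $z\in\partial G_0^{(N)}$, from which $|f^k(z)|>M^k(R)$ for $0\le k\le N$ follows by monotonicity of $M$.

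The genuine gap is the step you yourself flag: ``passing to the limit $N\to\infty$''. The domains $G_n^{(N)}$ are \emph{decreasing} in $N$ (since $G_N^{(N+1)}\subset\{|z|<\rho_N\}=G_N^{(N)}$, and this propagates down), so a naive limit $\bigcap_N G_n^{(N)}$ need not be open, and there is no reason the boundaries stabilise. What actually works, and what \cite{RS09a} does, is to avoid the limit entirely: one shows directly that for each $n$ the closed set $A_R^n(f)=\{z:|f^k(z)|\ge M^{n+k}(R)\text{ for all }k\ge 0\}$ has $0$ in a bounded complementary component. Your family $\{\partial G_n^{(N)}\}_{N\ge n}$ supplies a nested sequence of continua, all trapped in the fixed annulus $\{M^n(R)<|z|<\rho_n\}$, each separating $0$ from $\infty$; their intersection is a continuum in $A_R^n(f)$ that still separates (by a standard compactness argument for nested separating continua). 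That is the ``topological bookkeeping'' needed, and once stated this way it closes cleanly; but it is a real step, not just bookkeeping, and your write-up stops short of it.
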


\begin{lemma}\label{SW1}~\cite[Lemma 7.1(d)]{RS09a}
Let $f$ be a {\tef}, let $R>0$ be such that $M(r) > r$ for $r\geq R$, and let $R'>R$. Then $A_R(f)$ is a spider's web if and only if $A_{R'}(f)$ is a spider's web.
\end{lemma}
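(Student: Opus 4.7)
Since $R < R'$ gives $M^n(R) \leq M^n(R')$ for every $n \geq 0$, the set inclusion $A_{R'}(f) \subseteq A_R(f)$ is immediate. Moreover, because $M^n(R) \to \infty$, there exists $K \in \N$ with $M^K(R) \geq R'$, so also $A_{M^K(R)}(f) \subseteq A_{R'}(f)$. The plan is to prove the two implications separately, exploiting the inclusion for one direction and forward iteration for the other.

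\emph{Easy direction} (``$A_{R'}(f)$ \sw\ $\Rightarrow$ $A_R(f)$ \sw''): Let $(G_n)$ be the nested exhausting simply connected domains certifying the \sw\ structure of $A_{R'}(f)$. Then $\partial G_n \subset A_{R'}(f) \subseteq A_R(f)$, so the same $(G_n)$ already serve as the required exhausting loops for $A_R(f)$. The only remaining task is to verify that $A_R(f)$ is connected. For this I would invoke the standard fact (established elsewhere in \cite{RS09a}) that every component of $A_R(f)$ is unbounded: any component $C$ of $A_R(f)$ disjoint from the connected set $A_{R'}(f)$ must be disjoint from every loop $\partial G_n$, and being connected must therefore lie in a single annular region between two consecutive loops, forcing $C$ to be bounded, a contradiction.

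\emph{Main direction} (``$A_R(f)$ \sw\ $\Rightarrow$ $A_{R'}(f)$ \sw''): Suppose $A_R(f)$ is a \sw\ with witnessing domains $(G_n)$, and fix $K\in\N$ with $M^K(R)\geq R'$. The forward inclusion $f^K(A_R(f)) \subseteq A_{M^K(R)}(f) \subseteq A_{R'}(f)$ implies that each image curve $\Gamma_n := f^K(\partial G_n)$ lies in $A_{R'}(f)$. Once $G_n$ is large enough to contain at least one zero of $f^K$ (which holds for all large $n$, since $A_R(f)$ cannot be a \sw\ when $f$ is zero-free, as then any loop in $A_R(f)$ surrounding $0$ would have to meet a region where $|f|<M(R)$), the argument principle gives that $\Gamma_n$ has positive winding number around $0$, so $\Gamma_n$ separates $0$ from $\infty$. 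From $\Gamma_n$ I then extract a bounded simply connected domain $\wt G_n$ containing $0$ with $\partial \wt G_n \subset \Gamma_n \subset A_{R'}(f)$: explicitly, let $U_n$ be the unbounded component of $\C\setminus\Gamma_n$ and take $\wt G_n$ to be the component of $\C\setminus\partial U_n$ containing the origin. Because $\max_{z \in \partial G_n}|z| \to \infty$ implies $\max_{w\in\Gamma_n}|w|\to\infty$, a suitable subsequence of $(\wt G_n)$ is nested and exhausts $\C$. Connectedness of $A_{R'}(f)$ is then handled by the same unbounded-components argument as in the easy direction.

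The main obstacle is the construction of the nested exhausting sequence $(\wt G_n)$ from the potentially self-intersecting image curves $\Gamma_n$: one must carefully track the topology of $\C \setminus \Gamma_n$, and likely pass to an appropriate subsequence, in order to secure simultaneously the nesting and the exhaustion $\bigcup_n \wt G_n = \C$. Once this topological bookkeeping is in place, both implications collapse to applications of the unbounded-components property of $A_R(f)$ and $A_{R'}(f)$.
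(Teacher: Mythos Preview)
The paper does not supply its own proof of this lemma; it is quoted verbatim from \cite[Lemma~7.1(d)]{RS09a}. So there is no in-paper argument to compare against, and your proposal must stand or fall on its own.

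Your ``easy direction'' is fine: the loops of $A_{R'}(f)$ serve for $A_R(f)$, and the unbounded-components property of $A_R(f)$ (proved in \cite{RS09a}) forces connectedness exactly as you say.

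The ``main direction'' has a genuine gap. You assert that because $\max_{w\in\Gamma_n}|w|\to\infty$, a subsequence of the domains $\wt G_n$ is nested and exhausts~$\C$. This does not follow: a curve can have arbitrarily large maximum modulus while enclosing only a fixed bounded region around~$0$. From $\partial G_n\subset A_R(f)$ you only get $|f^K(z)|\ge M^K(R)$ on $\partial G_n$, a \emph{fixed} lower bound, so nothing you have written forces the ``inner radius'' of $\Gamma_n$ to grow. The obstacle you identify --- the topological bookkeeping of self-intersections of $\Gamma_n$ --- is real but secondary; the primary issue is that your construction gives no control on how large a disk $\wt G_n$ contains. (Your parenthetical argument that zero-free $f$ cannot have $A_R(f)$ a spider's web is also not right as stated, though the conclusion is true via the minimum-modulus principle: if $f$ had no zeros then $|f^n|\ge M^n(R)$ on $\partial G_m$ would propagate to all of $G_m$, forcing $A_R(f)=\C$.)

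A clean repair is to replace your $\wt G_n$ by the ``filled image'' $V_n$, namely the complement of the unbounded component of $\C\setminus f^K(G_n)$. Then $V_n$ is bounded and simply connected, $\partial V_n\subset\partial(f^K(G_n))\subset f^K(\partial G_n)=\Gamma_n\subset A_{R'}(f)$ by the open mapping theorem, the $V_n$ are increasing since the $G_n$ are, and $\bigcup_n V_n\supset\bigcup_n f^K(G_n)=f^K(\C)$, which together with a one-point Picard adjustment gives $\bigcup_n V_n=\C$. This bypasses both the exhaustion problem and the winding-number/zero-free detour. The proof in \cite{RS09a} proceeds instead via the structural characterisation of the $A_R(f)$ spider's web in terms of the level sets $A_R^L(f)=A_{M^L(R)}(f)$, which makes the independence of $R$ almost immediate.
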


In addition, we need the following property of the maximum modulus function, which was proved in this form in~\cite{RS08}.

\begin{lemma}\label{convex}~\cite[Lemma 2.2]{RS08}
Let $f$ be a {\tef}. Then there exists $R>0$ such that, for all $r \geq R$ and all $c>1$,
\[
  M(r^c) \geq M(r)^c.
\]
\end{lemma}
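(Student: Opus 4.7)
The plan is to reformulate the inequality as a monotonicity statement and then exploit the convexity of $\log M(r)$ as a function of $\log r$. Taking logarithms and dividing by $c\log r$ (for $r>1$), the inequality $M(r^c)\ge M(r)^c$ is equivalent to
\[
\frac{\log M(r^c)}{\log r^c}\ \ge\ \frac{\log M(r)}{\log r}.
\]
So, setting $\psi(s):=\log M(e^s)$ with $s=\log r$, the lemma reduces to showing that $h(s):=\psi(s)/s$ is non-decreasing on $[\log R,\infty)$ for some $R>0$. Indeed, given any $r\ge R$ and $c>1$, applying this monotonicity with $s_1=\log r$ and $s_2=c\log r$ yields the required estimate.

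The central input is Hadamard's three circles theorem, which gives that $\psi$ is convex on~$\R$. Hence $\psi$ has a non-decreasing right derivative defined everywhere, and where $h$ is differentiable one has
\[
h'(s)\ =\ \frac{s\psi'(s)-\psi(s)}{s^2}.
\]
Since $h$ is locally absolutely continuous on $(0,\infty)$ (as $\psi$ is locally Lipschitz), it will suffice to show that the numerator is positive for all sufficiently large $s$. For this I would apply the tangent inequality $\psi(s_0)\ge\psi(s)+\psi'(s)(s_0-s)$ of convex functions at a fixed base point $s_0>0$, which rearranges to
\[
s\psi'(s)-\psi(s)\ \ge\ s_0\psi'(s)-\psi(s_0).
\]

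The main step is then to show that this lower bound tends to $\infty$. Since $f$ is transcendental, $\log M(r)/\log r\to\infty$ as $r\to\infty$, that is, $\psi(s)/s\to\infty$. Combined with convexity, this forces $\psi'(s)\to\infty$: otherwise $\psi'\le L$ eventually, giving the contradictory bound $\psi(s)\le\psi(0)+Ls$. Consequently $s\psi'(s)-\psi(s)\to\infty$, so $h'(s)>0$ for all sufficiently large $s$, and $h$ is non-decreasing on $[\log R,\infty)$ for a suitably large $R$, which translates back to the asserted inequality. The only minor obstacle is handling the possible non-differentiability points of the convex function $\psi$; this is routine, since one can work throughout with one-sided derivatives and invoke the fact that the monotonicity of a locally absolutely continuous function follows from a.e.\ positivity of its derivative.
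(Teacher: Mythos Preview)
Your argument is correct. The paper itself does not prove this lemma but simply cites it as \cite[Lemma~2.2]{RS08}. The route you take---recasting the inequality as monotonicity of $\log M(r)/\log r$, invoking the convexity of $s\mapsto\log M(e^{s})$ from Hadamard's three circles theorem, and using that $\log M(r)/\log r\to\infty$ for transcendental $f$ to force $\psi'_{+}(s)\to\infty$---is the standard one. One small simplification: once you have $s\psi'_{+}(s)\ge\psi(s)$ for $s\ge S$, the monotonicity of $h$ follows directly from the supporting-line inequality without any absolute-continuity argument, since for $S\le s_{1}<s_{2}$,
\[
\psi(s_{2})\ \ge\ \psi(s_{1})+\psi'_{+}(s_{1})(s_{2}-s_{1})\ \ge\ \psi(s_{1})+\frac{\psi(s_{1})}{s_{1}}(s_{2}-s_{1})\ =\ \frac{s_{2}}{s_{1}}\,\psi(s_{1}).
\]
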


We are now in a position to prove Theorem~\ref{cond}.

\begin{proof}[Proof of Theorem~\ref{cond}]
Let $R>0$ be such that, for $r \geq R$, Lemma~\ref{convex} holds and $M(r) > r$. For $n \in \N$, let
\[
  R_n = M^n(R) \; \;  \mbox{ and } \; \;  \eps_n = \max_{R_n \leq r \leq R_{n+1}} \frac{\log \log M(r)}{\log
  r}.
\]
Suppose that $\sum_{n \in \N} \eps_n < \infty$. Then we can take $N$ sufficiently large to ensure that
\begin{equation}\label{size1}
\sum_{n \geq N} \eps_n < \frac{1}{8},
\end{equation}
and
\begin{equation}\label{size2}
  M(R_n)^{1/(8n^2)} = R_{n+1}^{1/(8n^2)} \geq e^2, \mbox{ for } n \geq N, \mbox{ and }
R_{N+1}^{1/4} \geq R_N \geq r(f),
\end{equation}
where $r(f)$ is as defined in Theorem~\ref{cos}. Note that~\eqref{size2} is possible since $\log M(r) / \log r \to \infty$ and so, for large $n$, we have $\log R_{n+1} > 4 \log R_n$.

Now let
\[
r_n = M^{n+1}\left( R_{N+1}^{\prod_{m=N}^{N+n}(1 - 2\eps_m - 1/(8m^2)} \right), \; \mbox{ for } \; n \geq 0.
 \]
 We note that, for $n \geq 0$, it follows from~\eqref{size1} that
\[
\prod_{m=N}^{N+n}\left(1 - 2\eps_m - \frac{1}{8m^2}\right) > 1 - \sum_{m=N}^{N+n}2 \eps_m - \sum_{m=N}^{N+n}\frac{1}{8m^2} \geq \frac{1}{2}
\]
and so, by~\eqref{size2},
\[
R_{N+n+2} > r_n > M^{n+1}(R_{N+1}^{1/2}) \geq M^{n+1}(R_N^2) = R_{N+n+1}^2.
\]
We claim that, for $n \geq 0$, there exists $\rho_n \in (R_{N+n+1},r_n)$ with $m(\rho_n) > r_{n+1}$. Indeed, it follows from Theorem~\ref{cos},~\eqref{size1},~\eqref{size2} and Lemma~\ref{convex} that, for $n \geq 0$, there exists $\rho_n \in (r_n^{1-2\eps_{n+N+1}},r_n) \subset (R_{N+n+1},r_n)$ such that
\begin{eqnarray*}
m(\rho_n) & \geq & \frac{1}{e^2}M(r_n^{1-2\eps_{n+N+1}})\\
 & \geq & M(r_n^{1-2\eps_{n+N+1}})^{1-1/(8(n+N+1)^2)}\\
 & \geq & M(r_n^{(1-2\eps_{n+N+1})(1-1/(8(n+N+1)^2))})\\
 & \geq & M(r_n^{(1-2\eps_{n+N+1} -1/(8(n+N+1)^2))})\\
 & = & M\left( \left( M^{n+1}\left( R_{N+1}^{\prod_{m=N}^{N+n}(1 - 2\eps_m - 1/(8m^2)} \right)\right )^{(1-2\eps_{n+N+1} -1/(8(n+N+1)^2))}\right)\\
 & \geq & M^{n+2} \left(  R_{N+1}^{\prod_{m=N}^{N+n+1}(1 - 2\eps_m - 1/(8m^2)} \right)\\
 & = & r_{n+1}.
\end{eqnarray*}
Thus, for $n \geq 0$, there exists $\rho_n > R_{N+n}$ with $m(\rho_n) \geq \rho_{n+1}$ and so, by Lemma~\ref{m(r)}, $A_{R_{N+1}}(f)$ is a {\sw}. It now follows from Lemma~\ref{SW1} that $A_R(f)$ is a {\sw} as claimed.
\end{proof}

\section{Proof of Theorem~\ref{ex}}
\setcounter{equation}{0}

Let
\[
f(z)=  z^3 \prod_{n=1}^{\infty}\left(1+\frac{z}{a_n}\right)^{2p_n},
\]
where the sequence $(a_n)$ is positive and strictly increasing.
In addition, let $(\delta_n)$ be a positive sequence such that
\[
  \sum_{n \in \N} \delta_n = \infty,
\]
and let
\begin{equation}\label{def}
p_n = [a_n^{\delta_n/4}/4].
\end{equation}
Without loss of generality, we assume that
\begin{equation}\label{del}
\delta_n < 1/2, \mbox{ for } n \in \N.
\end{equation}
Note that $f((-\infty,0]) \subset (-\infty,0]$ and that $m(r) = f(-r)$ and $M(r) = f(r) > r^3$, for $r > 0$. Further, $M(r) > r$ for $r\geq 1$.

We first show that the sequence $(a_n)$ can be chosen so that $A(f) \cap (-\infty,0] = \emptyset$.

We choose the values of $a_n$ carefully, beginning with $a_1$, then $a_2$ and so on.
Because of the way in which we choose the values of $a_n$, it is helpful
to introduce the function $g$ defined by
\begin{equation}\label{g}
 g(r)=\left\{
 \begin{array}{ll}
 r^3, &0 \leq r < a_1,\\
 r^3\displaystyle\prod_{a_n \leq r} \left( 1 + \frac{r}{a_n} \right)^{2p_n}, &r \geq a_1.
 \end{array}
 \right.
\end{equation}

Note that $g$ is a strictly increasing function and that it is discontinuous at $a_n$, for $n \in \N$. A key property of $g$ which we use repeatedly is that
\begin{equation}\label{fgM}
m(r) = -f(-r) < g(r) < M(r), \mbox{ for } r \geq 0.
\end{equation}
Since $g$ is increasing,~\eqref{fgM} implies that
\begin{equation}\label{fg}
f([-r,0]) \subset [-g(r),0], \mbox{ for } r \geq 0.
\end{equation}
We now set $r_0 = 10$ and $r_{n+1} = g(r_n) = g^{n+1}(10)$, for $n \in \N$, and note that
\begin{equation}\label{rn}
r_{n+1} \geq r_n^3, \mbox{ for } n \geq 0.
\end{equation}
Also, it follows from~\eqref{fg} that
\begin{equation}\label{rn1}
f^n((-r_m,0]) \subset (-r_{m+n},0], \mbox{ for } n,m \in \N.
\end{equation}

We begin by proving the following result.

 \begin{lemma}\label{Nk}
If there exists a sequence $(N_k)$ such that,
\begin{equation}\label{A1}
f^{N_1}((-r_2,0]) \subset (-r_{N_1},0]
\end{equation}
and, for $k \geq 2$,
\begin{equation}\label{Af}
f^{N_k}((-r_{N_1 + \cdots + N_{k-1} + 2k},0]) \subset (-r_{N_1 + \cdots + N_k},0],
\end{equation}
then $A(f) \cap (-\infty,0] = \emptyset$.
\end{lemma}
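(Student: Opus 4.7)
The plan is to suppose $z\in(-\infty,0]$ lies in $A(f)$ and derive a contradiction by iterating the hypotheses (A1) and (Af), interleaved with extra ``trivial'' iterations of $f$, to produce an upper bound on $|f^{n_j}(z)|$ at a well-chosen sequence of times $n_j$ that grows strictly slower than the lower bound forced by membership in $A(f)$.

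Throughout I would use the trivial iteration bound $|f^n(z)|<g^n(|z|)$ for $n\geq 1$ and $z\in(-\infty,0]$, which follows from $m<g$ in \eqref{fgM} together with the strict monotonicity of $g$; in particular $|z|<r_m$ gives $|f^n(z)|<r_{n+m}$. Given $z$, choose $m_0$ with $|z|<r_{m_0}$, write $\Sigma_k:=N_1+\cdots+N_k$ (with $\Sigma_0=0$), and let $k_0\geq 1$ be the least integer with $m_0\leq \Sigma_{k_0-1}+2k_0$, so that $z$ lies in the starting set of (Af) at $k=k_0$ (the case $k_0=1$ being exactly (A1)). By induction on $j\geq 1$ I would establish
\[
 |f^{n_j}(z)|<r_{\Sigma_{k_0+j-1}},\qquad n_j:=\bigl(\Sigma_{k_0+j-1}-\Sigma_{k_0-1}\bigr)+(j-1)(2k_0+j).
\]
The base case $j=1$ is precisely (Af) applied at $k=k_0$. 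For the step from $j$ to $j+1$, apply $f$ an additional $2(k_0+j)$ times using the trivial bound to obtain $|f^{n_j+2(k_0+j)}(z)|<r_{\Sigma_{k_0+j-1}+2(k_0+j)}$, then apply (Af) at $k=k_0+j$; an arithmetic check confirms that $n_{j+1}=n_j+2(k_0+j)+N_{k_0+j}$, closing the induction.

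To finish, suppose for contradiction $z\in A(f)$. Since $A(f)$ is independent of the choice of $R$, take $R=r_0=10$ (valid since $M(r)>r$ for $r\geq 1$), so $M^n(R)\geq g^n(R)=r_n$. Then there is $L\in\N$ with $|f^{n+L}(z)|\geq M^n(R)\geq r_n$ for every $n\geq 0$; at $n=n_j-L$ (valid once $n_j\geq L$) this gives $|f^{n_j}(z)|\geq r_{n_j-L}$. Combining with the upper bound above and the strict monotonicity of $(r_n)$ forces $n_j-L<\Sigma_{k_0+j-1}$, equivalently
\[
 (j-1)(2k_0+j)<L+\Sigma_{k_0-1}.
\]
The left side grows quadratically in $j$ while the right is a fixed constant, yielding a contradiction for $j$ sufficiently large. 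Hence $z\notin A(f)$, completing the proof.

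The main obstacle will be the bookkeeping in the inductive step, where one must verify that exactly $2(k_0+j)$ trivial iterations saturate the starting hypothesis of (Af) at $k=k_0+j$, and that the resulting iteration counts $n_j$ exceed $\Sigma_{k_0+j-1}-\Sigma_{k_0-1}$ by a quadratic-in-$j$ surplus. It is precisely this quadratic surplus that overcomes the fixed linear shift $L$ allowed by $z\in A(f)$; were the cumulative savings only linear in $j$, the argument would fail.
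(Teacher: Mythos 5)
Your proof is correct, and the central idea is the same as the paper's: use the hypotheses to trap the iterates of $z\in(-\infty,0]$ inside intervals $(-r_{n'},0]$ with $n'$ lagging the iteration count $n$ by an amount that grows without bound, which is incompatible with $z\in A(f)$. The difference is in how the ``wasted'' trivial iterations are distributed. The paper front-loads them: for $z\in(-r_k,0]$ it notes $f^k(z)\in(-r_{2k},0]$ and then composes the hypothesis (Af) to get
\[
f^{N_1+\cdots+N_k}\bigl((-r_{2k},0]\bigr)\subset(-r_{N_1+\cdots+N_k},0],
\]
so that $|f^{N_1+\cdots+N_k+k}(z)|<r_{N_1+\cdots+N_k}<M^{N_1+\cdots+N_k}(R)$; here the slack between the iteration count and the subscript is exactly $k$, linear in $k$, and beats any fixed shift once $k$ is large enough. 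You instead interleave $2(k_0+j)$ trivial iterations between successive applications of (Af), which makes the cumulative slack $(j-1)(2k_0+j)$ quadratic in $j$. Both comfortably overcome the constant $L+\Sigma_{k_0-1}$; the paper's version is shorter and makes the role of the constant $2k$ in (Af) more transparent, while yours tracks the precise iteration counts $n_j$ explicitly. One small point on your phrasing: the extra $2(k_0+j)$ iterations are not required to bring $f^{n_j}(z)$ into the starting set of (Af) at $k=k_0+j$, since $r_{\Sigma_{k_0+j-1}}<r_{\Sigma_{k_0+j-1}+2(k_0+j)}$ already; rather, their purpose is to raise the iteration count relative to the modulus bound, and it is exactly that surplus which yields the contradiction. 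You might also note explicitly that every iterate of $z$ stays in $(-\infty,0]$ (since $f((-\infty,0])\subset(-\infty,0]$), which is needed each time you invoke (Af) or the trivial bound, and that ruling out $z\in A(f)$ via an arbitrarily large shift $L$ is legitimate because $A_R(f)$ is forward invariant.
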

\begin{proof}
We first note that, if the hypotheses of Lemma~\ref{Nk} hold, then it follows from~\eqref{rn1} and~\eqref{Af} that, for $k \in \N$,
\begin{eqnarray*}
f^{N_1 + \cdots + N_k}((-r_{2k},0]) & = & f^{N_k}(f^{N_1 + \cdots + N_{k-1}})((-r_{2k},0])\\
& \subset & f^{N_k}((-r_{N_1 + \cdots + N_{k-1}+2k},0])\\
& \subset &(-r_{N_1 + \cdots + N_k},0].
\end{eqnarray*}
Thus
\begin{equation}\label{Af2}
f^{N_1 + \cdots + N_k}((-r_{2k},0]) \subset (-r_{N_1 + \cdots + N_k},0].
\end{equation}
Now let $z \in (-\infty,0]$. There exists $K \in \N$ such that, for $k \geq K$, we have $z \in (-r_k,0]$ and hence, by~\eqref{rn1}, we have $f^k(z) \in (-r_{2k},0]$. Thus, by~\eqref{Af2} and~\eqref{fgM}, for $k \geq K$,
\[
 |f^{N_1 + \cdots + N_k+k}(z)| < r_{N_1 + \cdots + N_k} < M^{N_1 + \cdots + N_k}(10)
\]
and hence
\[
 z \notin \{z: |f^{n+k}(z)| \geq M^n(10) \mbox{ for } n \in \N\}.
\]
 Thus $A(f) \cap (-\infty,0] = \emptyset$ as required.
 \end{proof}

We will show that we can choose the values of $a_n$ in such a way that the hypotheses of Lemma~\ref{Nk} hold. In order to do this, it is helpful to set certain restrictions on our choice of values. Firstly, we choose $a_1$ and $a_{n+1}/a_n$, $n \in \N$, sufficiently large to ensure that
\begin{equation}\label{r1}
  a_1^{\delta_1/4} \geq 4, \; \; a_{n+1} > a_n^2, \; \; a_{n+1}^{\delta_{n+1}/2} > 16 a_n^{\delta_n}
 \end{equation}
 and
 \begin{equation}\label{r2}
a_{n+1}^{\delta_{n+1}/16} >  a_{n}^{\delta_{n}} \log a_{n+1}.
 \end{equation}
 We note that~\eqref{r1} implies that
 \begin{equation}\label{2pn}
 p_1 \geq 1 \; \; \mbox{ and } \; \;  p_{n+1} \geq 2p_n^2, \mbox{ for } n \in \N.
 \end{equation}

 We also place certain restrictions on our choice of the values of $a_n$ in relation to the values of $r_n$:
 \begin{equation}\label{R3}
 \mbox{ if } a_k \in [r_n, r_{n+1}), \mbox{ then } a_m \notin [r_n, r_{n+4})
 \mbox{ for } k,m \in \N, \; m \neq k.
\end{equation}

We now show that, in order to prove that the hypotheses of Lemma~\ref{Nk} hold, it is sufficient to prove the following result.

\begin{lemma}\label{step}
Suppose that, for some $m \in \N$, we have defined the values of $a_n$ for which $a_n \leq r_{m}$ in such a way that they satisfy~\eqref{r1},~\eqref{r2} and~\eqref{R3}. Then we can choose $N \in \N$ and the values of $a_n$ for which $r_m < a_n \leq r_{m+N-1}$ in such a way that they satisfy~\eqref{r1},~\eqref{r2} and~\eqref{R3} and, no matter how the later values of $a_n$ are chosen,
\[
f^N((-r_{m+1},0]) \subset (-r_{m+N},0].
\]
\end{lemma}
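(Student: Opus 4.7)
The plan is to work with the one-dimensional dynamics of $f$ on the non-positive real axis. Since $f((-\infty,0]) \subset (-\infty,0]$ and $|f(-s)| = m(s)$ for $s \geq 0$, the inclusion $f^N((-r_{m+1},0]) \subset (-r_{m+N},0]$ is equivalent to $m^N(s) < r_{m+N}$ for every $s \in (0, r_{m+1}]$. A helpful observation is that later zeros $a_n > r_{m+N-1}$ only introduce factors $(1 - s/a_n)^{2p_n} \in (0,1]$ to $m(s)$ on the relevant range, so once the bound is secured for the initial placement of zeros it persists regardless of subsequent choices; moreover $r_{m+1}, \ldots, r_{m+N}$ depend only on zeros with $a_n \leq r_{m+N-1}$.

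The strategy is to arrange the new zeros so that $\max_{s \in [0, r_{m+1}]} m(s) \leq r_{m+1}$, making $f$ a self-map of $(-r_{m+1},0]$; the conclusion then follows for any $N \geq 1$. I would begin by placing a single new zero $a_{k+1} \in (r_m, r_{m+1}]$, where $k$ is the largest index of an already-placed zero, close to $r_{m+1}$ from below. The extra reduction factor in $m(s)/g(s)$ contributed by this new zero is enormous at $s = r_{m+1}$ (at most $(2r_{m+1})^{-2p_{k+1}}$ if $a_{k+1} = r_{m+1}-1$), so $m(r_{m+1})$ becomes very small. By~\eqref{2pn} the multiplicity satisfies $p_{k+1} \geq 2p_k^2$, and since~\eqref{r1} forces the $p_n$ to grow super-exponentially, $p_{k+1}$ dominates $P := 3 + 2\sum_{a_n \leq r_m} p_n$. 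One can then take $N = 2$, so that $a_{k+1} \leq r_{m+N-1}$ holds automatically and~\eqref{R3} is trivially satisfied.

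To establish the self-map condition uniformly on $[0, r_{m+1}]$, I would split the interval. On $[0, r_m]$: monotonicity of $g$ and $m(s) < g(s)$ (see~\eqref{fgM}) give $m(s) < g(r_m) = r_{m+1}$. On $(r_m, r_{m+1}]$: ignoring factors $\leq 1$ from future zeros, $m(s)$ is bounded by $g(s)$ computed using only the old zeros times the additional reduction factor $(1-s/a_{k+1})^{2p_{k+1}}$. In this interval, the logarithmic derivative of the old $g$ is $\approx P/s$, while that of the reduction factor is $\approx -2p_{k+1}/(a_{k+1}-s)$. Since $p_{k+1} \gg P$ and $a_{k+1}$ is close to $r_{m+1}$, a direct computation---aided by the margin in~\eqref{r1} and~\eqref{r2} and the fact that $r_{n+1} \geq r_n^3$ from~\eqref{rn}---shows that the product stays below $r_{m+1}$.

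The principal obstacle I anticipate is the intermediate-range estimate on $(r_m, r_{m+1}]$: if $P$ is not dominated strongly enough by $2p_{k+1}$, or if the ratio $r_{m+1}/r_m$ is very large, a single new zero may not suffice to overcome the growth of the old $g$ throughout the interval. A natural fallback is to take $N$ larger and place several new zeros spread across the blocks $[r_{m+4j-3}, r_{m+4j})$ permitted by~\eqref{R3}, combining their individual local reductions via an internal iterative reset argument in the spirit of Lemma~\ref{Nk}. The verification of~\eqref{r1}--\eqref{R3} for each newly placed zero is routine once the positions are fixed appropriately in relation to $a_k$, $r_m$ and $\delta_{k+1}$.
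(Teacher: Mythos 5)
The self-map plan with a single new zero and $N=2$ cannot work, and the obstacle you flag at the end is not a technical wrinkle but the fatal flaw. The multiplicities are forced to be tiny: $p_{k+1}=[a_{k+1}^{\delta_{k+1}/4}/4]$, and since $(\delta_n)$ may tend to $0$, a zero placed near $r_{m+1}$ contributes, on most of $(r_m,r_{m+1}]$, a suppression factor whose logarithm is of size at most about $2p_{k+1}$, whereas the ``old'' $g$ grows by a factor whose logarithm is roughly $P\log(r_{m+1}/r_m)\approx P^2\log r_m$ over the same interval (here $P=3+2\sum_{a_n\le r_m}p_n$). Even granting $p_{k+1}\gtrsim P^2$ from~\eqref{r1} and~\eqref{2pn}, you still lose by the unbounded factor $\log r_m$. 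Concretely: at $s=r_{m+1}/2$ the suppression is only $2^{-2p_{k+1}}$ while $g_{\mathrm{old}}(r_{m+1}/2)\approx(r_{m+1}/2)^{P}$, and the product exceeds $r_{m+1}$ for all large $m$. Placing the zero elsewhere in the interval only makes the right end worse, since the factor $|1-s/a_{k+1}|^{2p_{k+1}}$ exceeds $1$ once $s>2a_{k+1}$; and~\eqref{R3} forbids placing several zeros in a short block, so there is no way to patch this locally.

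The fallback sketch is closer in spirit to what is needed but is both vague and circular: you invoke ``an internal iterative reset argument in the spirit of Lemma~\ref{Nk},'' but Lemma~\ref{Nk} is the consumer of Lemma~\ref{step}, not a tool available for its proof. The missing mechanism is quantitative bookkeeping of the \emph{overshoot} rather than a self-map: one should track a ratio such as $T_k=\log s_k/\log r_{m+k}$ for a suitably defined majorant sequence $(s_k)$ with $s_0=r_{m+1}$. Between zeros $T_k$ may creep up, but the increments form a summable series (because $\sum 1/p_n^{1/2}<\infty$, via~\eqref{2pn} and the convexity estimate of Lemma~\ref{g1} and the gap estimate of Lemma~\ref{g2}); at each zero the drop of $|f|$ near $-a_n$ (Lemma~\ref{small}) forces a multiplicative decrease by roughly $(1-\delta_n/16)$. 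The hypothesis $\sum\delta_n=\infty$ --- which your proposal never uses --- is exactly what guarantees that these decreases eventually outweigh the summable increases and push $T_K\le 1$ for some finite $K$, which is the termination needed. Without this divergence, no finite number of admissible zeros can produce the required delay, which is precisely why the lemma's hypotheses include $\sum\delta_n=\infty$.
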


Proving Lemma~\ref{step} is the key part of the proof that we can choose the sequence $(a_n)$ so as to ensure that $A(f) \cap (-\infty,0] = \emptyset$. Before proving Lemma~\ref{step}, we show that, if this result holds, then the hypotheses of Lemma~\ref{Nk} also hold.  First, by applying Lemma~\ref{step} when $m=1$ we see that there exists $N_{1,1} \in \N$ and a choice of $a_n$ for $r_1 < a_n \leq r_{N_{1,1}}$ such that
\begin{equation}\label{sa}
 f^{N_{1,1}}((-r_2,0]) \subset (-r_{N_{1,1}+1},0].
\end{equation}
We then apply Lemma~\ref{step} with $m = N_{1,1}$ and deduce that there exists $N_{1,2} \in \N$ and a choice of $a_n$ for $r_{N_{1,1}} < a_n \leq r_{N_{1,1} + N_{1,2} - 1}$ such that
\begin{equation}\label{sb}
f^{N_{1,2}}((-r_{N_{1,1}+1},0]) \subset (-r_{N_{1,1} + N_{1,2}},0].
\end{equation}
It follows from~\eqref{sa} and~\eqref{sb} that
\[
f^{N_{1,1} + N_{1,2}}(-r_2,0]  \subset   f^{N_{1,2}}(-r_{N_{1,1}+1},0]
 \subset (-r_{N_{1,1} + N_{1,2}},0].
\]
Putting $N_1 = N_{1,1} + N_{1,2}$, we deduce that we can choose the values of $a_n$ for which $r_1 < a_n \leq r_{N_1-1}$ in such a way that
\[
f^{N_1}((-r_2,0]) \subset (-r_{N_1},0].
\]
Thus~\eqref{A1} holds.

Now suppose that, for some $k \geq2$, we have defined $N_j$, for $1 \leq j \leq k-1$, and defined $a_n$, for $r_1 < a_n \leq r_{N_1 + \cdots + N_{k-1} - 1}$. We claim that we can use Lemma~\ref{step} to define $N_k \in \N$ and $a_n$ with $r_{N_1 + \cdots + N_{k-1} - 1} < a_n \leq r_{N_1 + \cdots + N_{k} - 1}$ such that~\eqref{Af} holds for $k$. The argument is similar to that given above. First, we apply Lemma~\ref{step} with $m = N_1 + \cdots + N_{k-1} + 2k-1$ to construct $N_{k,1}$ and $a_n$ with
\[
r_{N_1 + \cdots + N_{k-1} + 2k-1} < a_n \leq r_{N_1 + \cdots + N_{k-1} + N_{k,1} + 2k -2}
\]
such that
\[
f^{N_{k,1}}((-r_{N_1 + \cdots + N_{k-1} + 2k},0])
\subset (-r_{N_1 + \cdots + N_{k-1} + N_{k,1} + 2k -1},0].
\]

Then, for $2 \leq j \leq 2k$, we apply Lemma~\ref{step} repeatedly with 
\[
m = N_1 + \cdots + N_{k-1} + N_{k,1} + \cdots + N_{k,j-1} + 2k -j
\]
to construct $N_{k,j}$ and $a_n$ with
\[
r_{N_1 + \cdots + N_{k-1} + N_{k,1} + \cdots + N_{k,j-1} + 2k -j} < a_n \leq r_{N_1 + \cdots + N_{k-1} + N_{k,1} + \cdots + N_{k,j} + 2k -j-1}
\]
such that
\[
f^{N_{k,j}}((-r_{N_1 + \cdots + N_{k-1} + N_{k,1} + \cdots + N_{k,j-1} + 2k -j+1},0])
\subset (-r_{N_1 + \cdots + N_{k-1} + N_{k,1} + \cdots + N_{k,j} + 2k -j},0].
\]
Putting $N_k= N_{k,1} + \cdots + N_{k,2k}$, we deduce that $a_n$ can be chosen with
\[
r_{N_1 + \cdots + N_{k-1}-1} < a_n \leq r_{N_1 + \cdots + N_k - 1}
\]
such that
\[
f^{N_k}((-r_{N_1+ \cdots + N_{k-1} + 2k},0]) \subset (-r_{N_1 + \cdots + N_k},0]
\]
and hence~\eqref{Af} holds for $k$.

So, it remains to prove Lemma~\ref{step}.

We begin by proving four lemmas. The first describes the extent to which $f$ is small close to a zero at $-a_k$, where $k \in \N$.

\begin{lemma}\label{small}
 For each $k \in \N$,
\begin{equation}\label{one}
 |f(z)| < 1, \mbox{ for } z \in (-a_k,-a_k^{1-\delta_k/16}).
\end{equation}
\end{lemma}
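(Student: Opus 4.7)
The plan is to work directly with the Weierstrass product form of $f$ and isolate the single factor that forces $|f(z)|<1$. Writing $z=-r$ with $r\in(a_k^{1-\delta_k/16},a_k)$, we have
\[
\log|f(-r)| = 3\log r + \sum_{n=1}^{\infty} 2p_n\log|1 - r/a_n|,
\]
and I would split the sum according to whether $n=k$, $n<k$, or $n>k$. The $n=k$ piece supplies a large negative contribution, the $n>k$ terms are non-positive and can be discarded, and the $n<k$ contribution has to be shown to be much smaller than the $n=k$ piece.

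For the dominant $n=k$ term, the range of $r$ gives $r/a_k\ge a_k^{-\delta_k/16}$, so the standard bound $\log(1-x)\le -x$ yields $\log(1-r/a_k)\le -a_k^{-\delta_k/16}$. Combined with $2p_k\ge a_k^{\delta_k/4}/4$, which follows from the definition of $p_k$ as soon as $a_k^{\delta_k/4}\ge 8$ (achievable for all $k$ by taking $a_1$ slightly larger than~\eqref{r1} strictly requires), this term is bounded above by $-a_k^{3\delta_k/16}/4$. For $n>k$ the strict monotonicity of $(a_n)$ forces $r<a_k<a_n$, so $0<1-r/a_n<1$ and every such term is non-positive.

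The real work is in bounding the $n<k$ contribution. The conditions $a_{n+1}>a_n^2$ from~\eqref{r1} together with $\delta_k<1/2$ from~\eqref{del} give $r>a_k^{1-\delta_k/16}>a_{k-1}\ge a_n$, so $|1-r/a_n|\le r/a_n\le a_k/a_n$ and hence
\[
\sum_{n=1}^{k-1}2p_n\log|1-r/a_n| \le (\log a_k)\sum_{n=1}^{k-1}2p_n.
\]
Taking square roots in $a_{n+1}^{\delta_{n+1}/2}>16a_n^{\delta_n}$ from~\eqref{r1} gives $a_{n+1}^{\delta_{n+1}/4}\ge 4a_n^{\delta_n/4}$, so $\sum_{n=1}^{k-1}a_n^{\delta_n/4}$ is a geometric-type sum controlled by a small constant multiple of $a_{k-1}^{\delta_{k-1}/4}$. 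Condition~\eqref{r2} then converts $a_{k-1}^{\delta_{k-1}/4}\log a_k$ into at most $a_k^{\delta_k/16}$.

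Assembling the pieces gives
\[
\log|f(-r)| \le 3\log a_k + a_k^{\delta_k/16} - \tfrac14 a_k^{3\delta_k/16},
\]
and since $3\delta_k/16>\delta_k/16$, the negative term dominates whenever $a_k$ is large, which is guaranteed for all $k$ by choosing $a_1$ large enough at the outset. The main obstacle is the middle step: although $\log(r/a_n)$ can be as large as $\log a_k$ for every $n<k$, the rapid growth of $p_k$ has to outpace the accumulated contributions from the earlier $p_n$, and this balance is precisely what the conjunction of~\eqref{r1} (controlling $\sum 2p_n$) and~\eqref{r2} (absorbing the extra $\log a_k$ factor) was engineered to provide.
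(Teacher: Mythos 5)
Your proof is correct and follows essentially the same route as the paper: split the product into the $n=k$ factor (using the definition of $p_k$ and the range of $|z|$ to extract the dominating term $\exp(-c\,a_k^{3\delta_k/16})$), the $n<k$ factors (controlled via the growth condition \eqref{r1} on the $p_n$ and then absorbed by \eqref{r2}), and the $n>k$ factors (which the paper bounds crudely by $e^2$ and you drop directly as non-positive contributions to $\log|f|$, a minor and valid simplification). The arithmetic of exponents differs in inessential constants, but the mechanism — the $n=k$ factor beating the accumulated $n<k$ contribution by virtue of \eqref{r1} and \eqref{r2} — is exactly the paper's.
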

\begin{proof}
This holds since, for such a $z$, it follows from~\eqref{def},~\eqref{r1},~\eqref{r2} and~\eqref{2pn} that
\begin{eqnarray*}
|f(z)| & \leq & a_k^3 \left(1-\frac{a_k^{1 - \delta_k/16}}{a_k}\right)^{2p_k} \prod_{m=1}^{k-1}\left(1+\frac{a_k}{a_m}\right)^{2p_m}
\prod_{m \geq k+1} \left(1+\frac{a_k}{a_m}\right)^{2p_m}\\
& \leq & \left(1-\frac{1}{a_k^{\delta_k/16}}\right)^{a_k^{\delta_k/4}} a_k^{3 + 2p_1 + \cdots + 2p_{k-1}}
\prod_{m \geq k+1} \left(1+\frac{1}{a_m^{1-1/2^{m-k}}}\right)^{a_m^{1/2}}\\
& \leq & \exp(-a_k^{\delta_k/16}) a_k^{a_{k-1}^{\delta_{k-1}/2}}e^{1 + 1/2 + 1/4 + \cdots}\\
& \leq &  a_k^{a_{k-1}^{\delta_{k-1}}} \exp(-a_k^{\delta_k/16}) < 1.
\end{eqnarray*}
\end{proof}

The second lemma shows that there is a large increase in the size of $g(r)$ at $r=a_k$, where $k \in \N$.

\begin{lemma}\label{large}
For each $k \in \N$,
\[
\log g(a_k) \geq p_k^{1/2} \log g(a_k^{1 - \delta_k/16}).
\]
\end{lemma}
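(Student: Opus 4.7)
The plan is to lower-bound $\log g(a_k)$ by retaining only the single ``resonant'' factor $(1+a_k/a_k)^{2p_k}=4^{p_k}$ coming from the zero at $-a_k$, and to upper-bound $\log g(a_k^{1-\delta_k/16})$ by crude estimation of all the remaining factors. The size restrictions \eqref{r1} and \eqref{r2} on $(a_n)$ are tailored precisely so that the resonant factor overwhelms the cumulative contribution of the earlier zeros by a factor of at least $p_k^{1/2}$.

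First I would observe, using \eqref{r1} together with \eqref{del}, that $a_{k-1} < a_k^{1/2} < a_k^{1-\delta_k/16} < a_k$, so the product defining $g(a_k^{1-\delta_k/16})$ runs only over $n=1,\dots,k-1$, while the product for $g(a_k)$ picks up the additional factor $(1+1)^{2p_k}=4^{p_k}$. Retaining just that factor gives
\[
\log g(a_k) \;\geq\; 2p_k \log 2.
\]
For the upper bound, since $a_n \le a_{k-1} < a_k^{1-\delta_k/16}$ for $n \le k-1$, each ratio $a_k^{1-\delta_k/16}/a_n$ is at least $1$, so $\log(1+a_k^{1-\delta_k/16}/a_n) \le \log(2a_k) \le 2\log a_k$ for $a_k$ large. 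Hence
\[
\log g(a_k^{1-\delta_k/16}) \;\leq\; \Bigl(3 + 2\sum_{n=1}^{k-1} p_n\Bigr)\, 2\log a_k \;\leq\; C\, p_{k-1}\log a_k
\]
for some absolute constant $C$, where the last step uses \eqref{2pn} (which gives $p_{n+1} \geq 2p_n$, whence $\sum_{n=1}^{k-1} p_n \leq 2p_{k-1}$ by a geometric-series argument).

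It then remains to verify $p_k^{1/2}\cdot C\, p_{k-1}\log a_k \le 2p_k \log 2$, i.e.\ $p_{k-1}\log a_k \lesssim p_k^{1/2}$. Substituting the sizes $p_k \gtrsim a_k^{\delta_k/4}$ and $p_{k-1} \lesssim a_{k-1}^{\delta_{k-1}/4}$ obtained from \eqref{def} and \eqref{r1}, this reduces to an inequality of the shape $a_{k-1}^{\delta_{k-1}/4}\log a_k \lesssim a_k^{\delta_k/8}$. I would deduce this by squaring \eqref{r2} to get $a_k^{\delta_k/8} \ge a_{k-1}^{2\delta_{k-1}}(\log a_k)^2$, and then discarding one factor of $\log a_k$ and weakening the exponent $2\delta_{k-1}$ to $\delta_{k-1}/4$.

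The only real obstacle is book-keeping of the various numerical constants and checking that the crude upper bound $\log(2a_k)\le 2\log a_k$ and the passage from $\lfloor a_k^{\delta_k/4}/4\rfloor$ to $a_k^{\delta_k/4}$ lose nothing essential; conceptually the lemma merely records that, under the growth conditions \eqref{r1}--\eqref{r2}, the zero at $-a_k$ contributes overwhelmingly more to $g(a_k)$ than all of the earlier zeros combined, with enough of a margin to absorb the factor $p_k^{1/2}$.
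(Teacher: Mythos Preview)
Your proposal is correct and follows essentially the same route as the paper: the paper also keeps only the factor $(1+1)^{2p_k}$ to get $\log g(a_k)\ge 2p_k\log 2$, bounds $g(a_k^{1-\delta_k/16})<a_k^{3+2\sum_{m<k}p_m}\le a_k^{4p_{k-1}}$ (using $1+a_k/a_m\le a_k$ in place of your $\log(2a_k)\le 2\log a_k$), and then verifies $\frac{2p_k\log 2}{4p_{k-1}\log a_k}>p_k^{1/2}$ via \eqref{r1}, \eqref{r2} and \eqref{2pn}, exactly as you outline. The only differences are cosmetic constants.
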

\begin{proof}
For $k \in \N$, it follows from~\eqref{r1} that
\begin{eqnarray*}
g(a_k^{1 - \delta_k/16}) & < & a_k^3 \prod_{m=1}^{k-1}\left( 1 + \frac{a_k}{a_m}\right)^{2p_m}\\
& < & a_k^{3 + 2\sum_{m=1}^{k-1}p_m} \leq a_k^{4p_{k-1}}
\end{eqnarray*}
and
\[
g(a_k) \geq 2^{2p_k}.
\]
Thus, by~\eqref{r1},~\eqref{r2} and~\eqref{2pn},
\[
 \frac{\log g(a_k)}{\log g(a_k^{1 - \delta_k/16})} \geq \frac{2p_k \log 2}{4p_{k-1} \log a_k} > \frac{p_k}{3p_{k-1}\log a_k} > p_k^{1/2}.
\]
\end{proof}

The third lemma shows that $\log g$ has a convexity property.

\begin{lemma}\label{g1}
Let $r>0$ and $t \geq 2$. Then
\[
\log g(r^t) \geq t \log g(r).
\]
\end{lemma}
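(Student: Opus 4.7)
The plan is to expand $\log g(r^t) - t \log g(r)$ directly from the product definition~\eqref{g} of $g$ and reduce the claim to a pointwise inequality for each factor. When $r \ge a_1$, the linear contributions $3\log r^t$ and $t\cdot 3\log r$ cancel exactly, leaving
\[
\log g(r^t) - t \log g(r) = \sum_{a_n \le r} 2p_n \bigl[\log(1 + r^t/a_n) - t \log(1 + r/a_n)\bigr] + \sum_{r < a_n \le r^t} 2p_n \log(1 + r^t/a_n).
\]
The second sum is manifestly non-negative, so the problem reduces to proving a pointwise statement for each factor already present in $g(r)$: that $1 + r^t/a_n \ge (1 + r/a_n)^t$ whenever $a_n \le r$.

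Setting $u = r/a_n \ge 1$ and using $r = u a_n$, this pointwise claim becomes $1 + u^t a_n^{t-1} \ge (1+u)^t$. Since $(1+u) \le 2u$ for $u \ge 1$, one has $(1+u)^t \le (2u)^t = 2^t u^t$, so it suffices to verify $a_n^{t-1} \ge 2^t$. This is the main obstacle, and it is exactly where the restrictions~\eqref{r1} on the sequence $(a_n)$ enter: the conditions $a_1^{\delta_1/4} \ge 4$ and $\delta_1 < 1/2$ force $a_n \ge a_1 \ge 4$, which gives $a_n^{t-1} \ge 4^{t-1} = 2^{2(t-1)} \ge 2^t$ for every $t \ge 2$.

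Finally I would dispose of the remaining case $r < a_1$ by hand. If $r^t < a_1$ as well, then $g(r) = r^3$ and $g(r^t) = r^{3t}$, and equality holds. Otherwise $r < a_1 \le r^t$, which in particular forces $r \ge 1$, and the additional product factors in $g(r^t)$ beyond $r^{3t}$ are each at least $1$, so $\log g(r^t) \ge 3t \log r = t \log g(r)$. Apart from this base case and the one-line pointwise inequality, the argument is pure bookkeeping from the explicit product formula~\eqref{g}.
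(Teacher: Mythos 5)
Your proposal is correct and follows essentially the same route as the paper: both cancel the $r^{3t}$ factors, discard or drop the extra factors for $r < a_m \le r^t$, and reduce the claim to the factor-wise inequality $(1+r/a_m)^t \le 1 + r^t/a_m$ for $a_m \le r$, justified by $a_m \ge 4$ (from \eqref{r1}) and $t \ge 2$. The only differences are cosmetic — you verify the pointwise bound via $(1+u)^t \le (2u)^t$ and $a_n^{t-1} \ge 2^t$, while the paper uses $1 + r/a_m \le r/a_m^{1/2}$ and $a_m^{t/2} \ge a_m$ — and you spell out the base cases $r < a_1$, which the paper leaves implicit.
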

\begin{proof}
Let $r>0$ and $t\geq 2$. We have
\[
 g(r^t) \geq r^{3t}\prod_{a_m \leq r} \left( 1 + \frac{r^t}{a_m}\right)^{2p_m}
\]
and
\[
 g(r)^t = r^{3t} \prod_{a_m \leq r} \left( 1 + \frac{r}{a_m}\right)^{2p_mt}.
\]
Thus it is sufficient to show that
\[
 \left( 1 + \frac{r}{a_m}\right)^t \leq \left( 1 + \frac{r^t}{a_m}\right),
\]
when $a_m \leq r$. This is true since it follows from~\eqref{r1} that, for $a_m \leq r$ and $t \geq 2$,
\[
\left( 1 + \frac{r}{a_m}\right)^t \leq \left(\frac{r}{a_m^{1/2}}\right)^t = \frac{r^t}{a_m^{t/2}} <  1 + \frac{r^t}{a_m}.
\]
\end{proof}

The fourth lemma gives an upper bound on the growth of $g$ on intervals where no point is the modulus of a zero of $f$.

\begin{lemma}\label{g2}
Let $r>0$, $0 < s < 1/2$ and $t>1$ and suppose that there are no values of $n \in \N$ for which $a_n \in (r^s,r^t]$. Then
\[
\log g(r^t) \leq t(1+2s) \log g(r).
\]
\end{lemma}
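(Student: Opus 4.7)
The plan is to leverage the hypothesis that no $a_n$ lies in $(r^s, r^t]$ to force the products appearing in $g(r)$ and $g(r^t)$ to range over exactly the same index set, and then reduce the claim to a factor-by-factor inequality.

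More concretely, I would first observe that for $r \geq 1$ we have $r^s < r < r^t$, so the hypothesis forces $\{n : a_n \leq r\}$ and $\{n : a_n \leq r^t\}$ to both coincide with $\{n : a_n \leq r^s\}$. Consequently,
\[
g(r) = r^3 \prod_{a_n \leq r^s}\left(1 + \frac{r}{a_n}\right)^{2p_n}, \qquad g(r^t) = r^{3t} \prod_{a_n \leq r^s}\left(1 + \frac{r^t}{a_n}\right)^{2p_n}.
\]
If $r^s < a_1$, both products are empty (the hypothesis also forces $r^t < a_1$, since otherwise $a_1 \in (r^s,r^t]$), and the claim collapses to the trivial bound $3t\log r \leq 3(1+2s)t\log r$. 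In the remaining case $r^s \geq a_1$, this leading-factor inequality still holds, so it suffices to prove the per-term inequality
\[
\log(1 + r^t/a_n) \leq (1+2s)\,t\,\log(1 + r/a_n) \qquad \text{for each } a_n \leq r^s.
\]

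To establish this, I would apply the elementary bounds $\log(1+x) \leq \log 2 + \log x$ on the left (valid since $r^t/a_n \geq r^{t-s} \geq 1$) and $\log(1+x) \geq \log x$ on the right, reducing the per-term inequality to the purely arithmetic one
\[
\log 2 \leq 2st\log r - \bigl((1+2s)t - 1\bigr)\log a_n.
\]
Since $(1+2s)t - 1 > 0$ and $\log a_n \leq s\log r$, the right-hand side is at least $s\log r\bigl[t(1-2s) + 1\bigr] \geq s\log r$. Because $r^s \geq a_1 \geq 4$ by~\eqref{r1}, we have $s\log r \geq \log 4 > \log 2$, closing the argument.

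The main obstacle is picking the right constants so that the multiplier comes out to exactly $(1+2s)t$ and no larger. The naive comparison $(1+y)^t \geq 1 + y^t$ used in Lemma~\ref{g1} only yields a factor of $t$, which is too weak precisely when $a_n$ approaches the upper boundary $r^s$; the extra $2s$ of slack in $(1+2s)t$ is tailored to absorb this boundary effect. The assumption $s < 1/2$ enters essentially through the positivity of $t(1-2s) + 1$, which keeps the final arithmetic uniformly positive, and the structural bound $a_1 \geq 4$ from~\eqref{r1} then bridges the remaining gap to the constant $\log 2$ without any further restriction on $r$.
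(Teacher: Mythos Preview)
Your argument is correct. Both your proof and the paper's begin with the same observation---the hypothesis forces the products defining $g(r)$ and $g(r^t)$ to range over the identical index set $\{m: a_m \le r^s\}$---and both then exploit the crude bounds $1+x \le 2x$ (for $x\ge 1$) and $1+x \ge x$ on the individual factors. The execution differs, however. Where you work factor by factor and reduce to the arithmetic inequality
\[
\log 2 \le 2st\log r - \bigl((1+2s)t-1\bigr)\log a_n,
\]
the paper aggregates immediately: writing $P=\sum_{a_m\le r^s} p_m$, it bounds $g(r^t)<r^{t(3+2P)}$ (using $1+r^t/a_m<r^t$, valid since $a_m\ge a_1$ is large by~\eqref{r1}) and $g(r)>r^{3+2(1-s)P}$ (using $1+r/a_m>r/a_m\ge r^{1-s}$), so that
\[
\frac{\log g(r^t)}{\log g(r)} < \frac{t(3+2P)}{3+2(1-s)P}\le \frac{t}{1-s}\le t(1+2s).
\]
The paper's route is a little slicker---the hypothesis $s<1/2$ is used only in the final step $1/(1-s)\le 1+2s$---while your per-term approach requires more arithmetic bookkeeping but makes the use of the quantitative lower bound on $a_1$ from~\eqref{r1} explicit.
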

\begin{proof}
It follows from~\eqref{r1} that
\[
g(r^t) = r^{3t}\prod_{a_m \leq r^s}\left( 1 + \frac{r^t}{a_m}\right)^{2p_m} <
 r^{3t}\prod_{a_m \leq r^s}r^{2p_mt} = r^{t(3 + \sum_{a_m \leq r^s}2p_m) }
\]
and
\[
g(r) > r^3\prod_{a_m \leq r^s}\left( \frac{r}{a_m}\right)^{2p_m} > r^{3 + \sum_{a_m \leq r^s}2p_m(1-s)}.
\]
Thus
\[
\log g(r^t)/ \log g(r) < t/(1-s) \leq t(1 + 2s),
\]
since $s < 1/2$.
\end{proof}

We are now in a position to prove Lemma~\ref{step}.

\begin{proof}[Proof of Lemma~\ref{step}]
Suppose that $m \in \N$ and that we have defined the values of $a_n$ for which $a_n \leq r_m$. We now define a sequence $(s_k)$, $0 \leq k \leq N$, inductively according to certain rules that we give below. Each time we define a value $s_k$, we also add a zero of $f$ at $-s_k$ provided this is allowed by~\eqref{r1},~\eqref{r2} and ~\eqref{R3}; no other zeros of $f$ are added. We choose our values $s_k$ in such a way that
\begin{equation}\label{sk}
r_{m+k} \leq s_k \leq r_{m+k+1}, \mbox{ for } 0 \leq k < N,
\end{equation}
\begin{equation}\label{sN}
s_N \leq r_{m+N}
\end{equation}
and
\begin{equation}\label{rs}
f^k((-r_{m+1},0]) \subset (-s_k,0], \mbox{ for } 0 \leq k \leq N.
\end{equation}
The result of Lemma~\ref{step} follows directly from~\eqref{sN} and~\eqref{rs}. The difficult part of the proof is to show that there exists an $N \in \N$ for which~\eqref{sN} is satisfied.

We define our sequence $(s_k)$ as follows:
\begin{itemize}
\item set $s_0 = r_{m+1}$;

\item if $s_k > r_{m+k}$ and there is a zero of $f$ at $-s_k$, then we set
 \begin{equation}\label{sz}
 s_{k+1} = g(s_k^{1-\delta_{n_k}/16});
 \end{equation}

\item if $s_k > r_{m+k}$ and there is no zero of $f$ at $s_k$, then we set
\begin{equation}\label{snotz}
s_{k+1} = g(s_k);
\end{equation}

\item if $s_k \leq r_{m+k}$, then we terminate the sequence $(s_k)$.
\end{itemize}

It follows from Lemma~\ref{small} and~\eqref{fg} that, with this construction,~\eqref{sk},~\eqref{sN} and~\eqref{rs} are indeed satisfied.

It remains to prove that there exists $K \in \N$ such that the sequence terminates at $s_K$; that is, if
\[
T_k = \frac{\log s_k}{\log r_{m+k}},
\]
then there exists $K \in \N$ such that $T_K \leq 1$.

We introduce the following terminology. We let $L$ denote the largest integer for which $a_L \leq r_m$ and define a (finite) subsequence $(k_n)$ such that
\begin{equation}\label{skn}
a_{L+n} = s_{k_n}, \mbox{ for } n = 1,2,\ldots.
\end{equation}

The main idea is to show that, for each $n\geq 2$ we have that $T_{k_n + 1}$ is less than $T_{k_n}$, with $k_n$ defined as above. These decreases counteract the small increases that may occur from $T_k$ to $T_{k+1}$ for other values of $k$ and, for $n$ large enough, they will combine together to cause $T_{k_n +1}$ to drop below $1$.

We first estimate some quantities that will be useful in our calculations. We begin by noting that it follows from~\eqref{skn},~\eqref{sk} and Lemma~\ref{large} that, for $n \geq 1$,
\begin{eqnarray*}
\log r_{m+k_n+2} & = & \log g(r_{m+k_n + 1}) \\
& \geq & \log g(s_{k_n}) \geq p_{L+n}^{1/2} \log g(s_{k_n}^{1 - \delta_{L+n}/16}).
\end{eqnarray*}
Thus, by~\eqref{sz}
\begin{equation}\label{rps}
\log r_{m+k_n+2} \geq p_{L+n}^{1/2} \log s_{k_n + 1}, \mbox{ for } n \geq 1.
\end{equation}
Together with~\eqref{rn},~\eqref{rps} implies that

\begin{equation}\label{e1}
\log r_{m+k_n+q} \geq 3^{q-2}p_{L+n}^{1/2} \log s_{k_n + 1}, \mbox{ for }  q \geq 2, \; n \geq 1.
\end{equation}

Together with Lemma~\ref{g1},~\eqref{rps} implies that
\begin{equation}\label{f}
\frac{\log s_{k_n + q}}{\log r_{m + k_n + q + 1}} \leq \frac{\log g^{q-1}(s_{k_n + 1})}{\log g^{q-1}(r_{m + k_n + 2})}
\leq \frac{\log s_{k_n + 1}}{\log r_{m + k_n + 2}} \leq \frac{1}{p_{L+n}^{1/2}}, \mbox{ for } q \geq 2, \; n \geq 1.
\end{equation}

Now fix $n\geq 2$ and write
\[
t_{n,q} = T_{k_n + q} =  \frac{\log s_{k_n+q}}{\log r_{m + k_n +q}}, \mbox{ for }  q \geq 2.
\]
For $2 \leq q < k_{n+1} - k_n$, there are no zeros of $f$ with modulus in the interval $(s_{k_n},s_{k_{n+q}})$ and so it follows from~\eqref{snotz},~Lemma~\ref{g2} and~\eqref{e1} that, for such $q$,
\begin{eqnarray*}
\log s_{k_n + q + 1} & = & \log g(s_{k_n + q})\\
& \leq & t_{n,q}\left( 1 + 2 \frac{\log s_{k_n}}{\log r_{m + k_n + q}} \right) \log g(r_{m + k_n + q})\\
& = & t_{n,q} \left( 1 + 2 \frac{\log s_{k_n}}{\log r_{m + k_n + q}} \right) \log r_{m + k_n + q + 1}\\
& \leq & t_{n,q} \left( 1 +  \frac{2}{3^{q-2}p_{L+n}^{1/2}} \right) \log r_{m + k_n + q + 1}.
\end{eqnarray*}
Thus, for $2 \leq q < k_{n+1} - k_n$, we have
\begin{equation}\label{e2}
 t_{n,q+1} \leq t_{n,q}  \left( 1 + \frac{2}{3^{q-2}p_{L+n}^{1/2}}\right).
\end{equation}

For $q = k_{n+1} - k_n$, there are no zeros of $f$ with modulus in the interval $(s_{k_n},s_{k_{n+q}})$ and so it follows from~\eqref{sz},~Lemma~\ref{g2} and~\eqref{e1} that
\begin{eqnarray*}
\log s_{k_n + q + 1} & = & \log g(s_{k_n + q}^{1 - \delta_{L+n+1}/16})\\
& \leq & t_{n,q}\left( 1 - \frac{\delta_{L+n+1}}{16} \right) \left( 1 + 2 \frac{\log s_{k_n}}{\log r_{m + k_n + q}} \right) \log g(r_{m + k_n + q})\\
& = & t_{n,q} \left( 1 - \frac{\delta_{L+n+1}}{16} \right) \left( 1 + 2 \frac{\log s_{k_n}}{\log r_{m + k_n + q}} \right) \log r_{m + k_n + q + 1}\\
& \leq & t_{n,q} \left( 1 - \frac{\delta_{L+n+1}}{16} \right) \left( 1 +  \frac{2}{3^{q-2}p_{L+n}^{1/2}} \right) \log r_{m + k_n + q + 1}.
\end{eqnarray*}
Thus, for $q = k_{n+1} - k_n$, we have
\begin{equation}\label{e3}
t_{n,q+1} \leq t_{n,q} \left( 1 - \frac{\delta_{L+n+1}}{16} \right) \left( 1 +  \frac{2}{3^{q-2}p_{L+n}^{1/2}} \right).
\end{equation}

Lastly, it follows from~\eqref{R3} that, if $q = k_{n+1} - k_n + 1$, then $q-1 \geq 2$. Also, there are no zeros of $f$ with modulus in the interval $(s_{k_{n+1}},s_{k_{n+1}+1}) = (s_{k_{n+1}},s_{k_n+q})$ and so it follows from Lemma~\ref{g2} and~\eqref{f} that
\begin{eqnarray*}
\log s_{k_n + q + 1} & = & \log g(s_{k_n + q})\\
& \leq & t_{n,q}\left( 1 + 2 \frac{\log s_{k_{n+1}}}{\log r_{m + k_n + q}} \right) \log g(r_{m + k_n + q})\\
& = & t_{n,q}\left( 1 + 2 \frac{\log s_{k_{n} + q-1}}{\log r_{m + k_{n} + q}} \right) \log r_{m + k_n + q + 1}\\
& \leq &  t_{n,q}\left( 1 +  \frac{2}{p_{L+n}^{1/2}} \right) \log r_{m + k_n + q + 1}.\\
\end{eqnarray*}
Thus, for $q = k_{n+1} - k_n + 1$, we have
\begin{equation}\label{e4}
t_{n,q+1} \leq t_{n,q} \left( 1 +  \frac{2}{p_{L+n}^{1/2}} \right).
\end{equation}

It follows from~\eqref{e2},~\eqref{e3},~\eqref{e4} and~\eqref{2pn} that, for $M \geq 2$, we have
\begin{eqnarray*}
T_{k_{M+1} + 2} & = & t_{M, k_{M+1} - k_M + 2}\\
 & = & t_{2,2} \prod_{n=2}^M \prod_{q=2}^{k_{n+1} - k_n + 1} \frac{t_{n,q+1}}{t_{n,q}}\\
  & \leq & t_{2,2} \prod_{n = 2}^M \left( 1 + \frac{2}{p_{L+n}^{1/2}} \right)\left( 1 - \frac{\delta_{L+n+1}}{16}\right)
  \prod_{q=2}^{k_{n+1} - k_n}\left( 1 + \frac{2}{3^{q-2}p_{L+n}^{1/2}}\right)\\
  & \leq & t_{2,2} \prod_{n = 2}^M \left( \left( 1 + \frac{2}{p_{L+n}^{1/2}} \right)^3\left( 1 - \frac{\delta_{L+n+1}}{16}\right) \right) .
\end{eqnarray*}
 It follows from~\eqref{2pn} that $\sum_{n \in \N} \frac{1}{p_{L+n}^{1/2}} < \infty$ and so, since
$\sum_{n \in \N} \delta_{L+n + 1} = \infty$, we deduce that, for $M$ sufficiently large, $T_{k_{M+1} + 2} \leq 1$, as required.
\end{proof}

We have now proved Lemma~\ref{step}. As noted earlier, this is sufficient to imply that the hypotheses of Lemma~\ref{Nk} hold and hence that $A(f) \cap (-\infty,0] = \emptyset$ as required.

We complete the proof of Theorem~\ref{ex} by showing that, in addition, conditions~\eqref{eps1} and~\eqref{eps2} are satisfied. That is, we prove the following.

\begin{lemma}\label{conds}
Let
\begin{equation}\label{max}
  \eps_n =
  \max_{R_n \leq r \leq R_{n+1}} \frac{\log \log M(r)}{\log r}.
\end{equation}
There exists a subsequence $(n_k)$ such that
\begin{equation}\label{epsd1}
 \eps_{n_k} \leq \delta_k + \frac{1}{2^{n_k}}, \mbox{ for } k \in \N,
  \end{equation}
   and
   \begin{equation}\label{epsd2}
    \eps_{n_k + m} \leq \frac{\delta_k}{3^{m-1}} + \frac{1}{2^{n_k + m} }, \; \mbox{ for } k \in \N, 1 \leq m < n_{k+1} - n_k.
\end{equation}
\end{lemma}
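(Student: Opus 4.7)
The plan is to take $n_k$ to be the unique index such that $a_k \in [R_{n_k}, R_{n_k+1})$. This is well-defined since $R_n \to \infty$, and by \eqref{R3} we have $n_{k+1} \geq n_k + 4$. The bulk of the proof is a careful estimate of $\log M(r) = 3\log r + \sum_j 2 p_j \log(1 + r/a_j)$ on each interval $[R_n, R_{n+1}]$. Writing $L(r) = \max\{j : a_j \leq r\}$ and $P_L = \sum_{j \leq L} p_j$, and using $\log(1 + r/a_j) \leq \log(2r/a_j)$ for $a_j \leq r$, $\log(1 + r/a_j) \leq r/a_j$ for $a_j > r$, together with $P_L \leq 2 p_L$ from \eqref{2pn} and the rapid decay of $\sum_{j > L+1} p_j/a_j$ afforded by \eqref{r1}, I would establish
\[
\log M(r) \leq (3 + 2P_L) \log r + 2 p_{L+1} \cdot \frac{r}{a_{L+1}} + O(P_L).
\]

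For \eqref{epsd2}, fix $k$ and $1 \leq m < n_{k+1} - n_k$. The interval $[R_{n_k+m}, R_{n_k+m+1}]$ contains no zero, so $L(r) \equiv k$ throughout, and the construction of Lemma~\ref{step} places $r$ far enough below $a_{k+1}$ that the correction term is $O(1)$. Hence $\log M(r) \leq 5 p_k \log r + O(1)$, and
\[
\frac{\log \log M(r)}{\log r} \leq \frac{\log p_k}{\log r} + O\!\left(\frac{\log\log r}{\log r}\right) \leq \frac{\delta_k}{4} \cdot \frac{\log a_k}{\log r} + \frac{1}{2^{n_k+m}}.
\]
Because $a_k < R_{n_k+1}$ and iterating $\log R_{i+1} \geq 3\log R_i$ (a consequence of $M(r) \geq r^3$) yields $\log R_{n_k+m}/\log R_{n_k+1} \geq 3^{m-1}$, we find $\log a_k/\log r \leq 3^{-(m-1)}$, so $\eps_{n_k+m} \leq \delta_k/(4 \cdot 3^{m-1}) + 1/2^{n_k+m} \leq \delta_k/3^{m-1} + 1/2^{n_k+m}$. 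The $1/2^{n_k+m}$ term absorbs the $o(1)$ remainder because $R_n \geq R^{3^n}$ makes $\log \log R_n/\log R_n$ doubly-exponentially small in $n$.

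For \eqref{epsd1}, split $[R_{n_k}, R_{n_k+1}]$ at $r = a_k$. On $[a_k, R_{n_k+1}]$ the argument above with $m = 0$ gives ratio at most $\delta_k/4 + o(1)$. On $[R_{n_k}, a_k)$ we have $L(r) = k-1$, and the bound specializes to $\log M(r) \leq 5 p_{k-1}\log r + 2 p_k \log(1+r/a_k) + O(P_{k-1})$, with $2 p_k \log(1+r/a_k) \leq 2 p_k \log 2$. A case split on whether $2 p_k$ or $5 p_{k-1}\log r$ dominates $\log M(r)$ yields: in the first case the ratio is at most $(\delta_k/4)\log a_k/\log r \leq \delta_k/(4-\delta_k) < \delta_k$ (using $\delta_k < 1/2$ from \eqref{del}); in the second it is at most $(\delta_{k-1}/4)\log a_{k-1}/\log r$. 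The main obstacle is bounding this latter quantity by $1/2^{n_k}$: one must invoke the specific structure from Lemma~\ref{step}, where $a_k = s_{k_n}$ for some iterate of $g$, to show that $\log R_{n_k}$ dominates $\log a_{k-1}$ strongly enough---well beyond the bare factor $3^{n_k-n_{k-1}-1} \geq 27$ coming from $M(r) \geq r^3$---that the $\delta_{k-1}$ contribution dissolves into $1/2^{n_k}$. Together with the upper-half bound, this yields $\eps_{n_k} \leq \delta_k + 1/2^{n_k}$, completing the proof of \eqref{epsd1}.
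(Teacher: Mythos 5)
Your choice of $n_k$---the index with $a_k\in[R_{n_k},R_{n_k+1})$---is exactly the paper's, and the general shape of the estimate on $\log M(r)$ (split the product at the last zero $a_L$ below $r$, bound the tail) is also the paper's. A few things, however, do not line up.

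First, a small point: \eqref{R3} concerns the auxiliary radii $r_n=g^n(10)$, not $R_n=M^n(R)$. The paper must first prove a separate separation statement \eqref{r3} for the $R_n$'s (this is what the argument around \eqref{second} does), and what it yields is $n_{k+1}\ge n_k+2$, not $n_k+4$. Your later arithmetic happens not to need the stronger spacing, but the claimed deduction from \eqref{R3} is incorrect as stated.

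The substantive gap is in your treatment of \eqref{epsd1} on $[R_{n_k},a_k)$. After bounding $\log M(r)\lesssim 5p_{k-1}\log r + 2p_k\log 2$, you split according to which term dominates and, in the $p_{k-1}$-dominated case, arrive at a ratio $\approx(\delta_{k-1}/4)\log a_{k-1}/\log r$ which you then try to push below $1/2^{n_k}$. You say yourself that this is ``the main obstacle'' and that it requires invoking extra quantitative structure from Lemma~\ref{step} that you do not actually establish. The paper avoids this obstacle entirely: it never tries to make the $a_{k-1}$-contribution sub-$\delta$-small. Instead it uses the growth hierarchy \eqref{r1}--\eqref{r2} to absorb $a_{k-1}^{\delta_{k-1}}$ into the $\delta_k$ term: from $a_k^{\delta_k/2}>16\,a_{k-1}^{\delta_{k-1}}$ one gets $a_{k-1}^{\delta_{k-1}}<a_k^{\delta_k/2}\le r^{\delta_k}$ once $r\ge a_k^{1/2}$, and in the complementary regime $r<a_k^{1/2}$ the factor from $a_k$ contributes only $O(1)$ so that $\log M(r)<e^3 r^{a_{k-1}^{\delta_{k-1}}}$, after which the $a_{k-1}^{\delta_{k-1}}$ exponent is again compared with $r^{\delta_k}$. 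The resulting bound is $\eps_{n_k}\le \delta_k+2\log\log R_{n_k}/\log R_{n_k}$, with the second term then absorbed into $1/2^{n_k}$ using $R_{n+1}\ge R_n^3$. So the target of the paper's estimate in the lower sub-interval is $\delta_k$ (the first term of \eqref{epsd1}), not $1/2^{n_k}$ (the second term); your plan aims at the wrong term and, as you concede, does not reach it.

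One further caution on \eqref{epsd2}: your statement that the correction $2p_{k+1}\,r/a_{k+1}$ is $O(1)$ ``because the construction places $r$ far enough below $a_{k+1}$'' is not free. What the paper actually uses here is the bound $\bigl(1+r/a_j\bigr)^{2p_j}\le\bigl(1+a_j^{-(1-1/2^{j-k})}\bigr)^{a_j^{1/2}}$ for $j\ge k+1$ together with \eqref{r1}, so that the whole tail is $\le e^{1+1/2+1/4+\cdots}=e^2$; this is a uniform estimate and does not rely on any extra promise from Lemma~\ref{step}.

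In short: the skeleton (choice of $n_k$, logarithmic estimate of $M$, iterated cubing of $R_n$) matches the paper, and your \eqref{epsd2} argument, once the tail estimate is done correctly, is sound. But the proof of \eqref{epsd1} as proposed has an acknowledged, unresolved gap, and it is a gap precisely because you chose to control the $a_{k-1}$-contribution by $1/2^{n_k}$ rather than by $\delta_k$ via \eqref{r1}--\eqref{r2}, which is where the paper's argument actually closes.
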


\begin{proof}
 We begin by setting $R_0 = r_0 = 10$ and defining $R_{n+1} = M(R_n)$, for $n \in \N$. Clearly $R_n \geq r_n$ by~\eqref{fgM} and
 \begin{equation}\label{Rn}
 R_{n+1} \geq R_n^3, \mbox{ for } n \in \N.
 \end{equation}

 We claim that

\begin{equation}\label{r3}
 \mbox{ if } a_k \in [R_n, R_{n+1}), \mbox{ then } a_m \notin [R_n, R_{n+2})
 \mbox{ for } k,m \in \N, \; m \neq k.
\end{equation}
In order to deduce this from~\eqref{R3}, it is sufficient to show that, if $r_p \in [R_n, R_{n+1})$, for some $p,n \in \N$, then $r_{p+2} > R_{n+1}$. We prove this in two steps. Firstly, we note that if $r_p \in [R_n, R_n^3)$, for some $p,n \in \N$, then it follows from~\eqref{rn} that $r_{p+1} \geq r_p^3 \geq R_n^3$. Secondly, if $r_p \in [R_n^3,R_{n+1})$, for some $p,n \in \N$, then we claim that
\begin{equation}\label{second}
r_{p+1} = g(r_p) \geq g(R_n^3) > M(R_n) = R_{n+1}.
\end{equation}
This is true since, if $k$ is the smallest integer such that $a_k > R_n^3$, then
\[
g(R_n^3) = R_n^9 \prod_{m=1}^{k-1}\left(1 + \frac{R_n^3}{a_m} \right)^{2p_m}
\]
and so, by~\eqref{def} and~\eqref{r1},
\begin{eqnarray*}
M(R_n) = f(R_n) & = & R_n^3 \prod_{m=1}^{\infty}\left(1 + \frac{R_n}{a_m} \right)^{2p_m}\\
& < & \frac{g(R_n^3)}{R_n^6}\left(1 + \frac{R_n}{a_k} \right)^{2p_k}\prod_{m \geq k+1}\left(1 + \frac{a_k}{a_m} \right)^{2p_m}\\
& < & \frac{g(R_n^3)}{R_n^6} \left(1 + \frac{1}{a_k^{1/2}} \right)^{a_k^{1/2}} \prod_{m \geq k+1}\left(1 + \frac{1}{a_m^{1-1/2^{m-k}}} \right)^{a_m^{1/2}}\\
& \leq & \frac{g(R_n^3)}{R_n^6}e^{1+1+1/2+1/4+\cdots} < g(R_n^3).
\end{eqnarray*}
Thus~\eqref{second} does indeed hold and, by the reasoning above, this is sufficient to show that~\eqref{r3} holds.

Now, for $k \in \N$, we choose $n_k \in \N$ such that $a_k \in [R_{n_k}, R_{n_k + 1})$. Then, by~\eqref{r3}, this defines a sequence $(n_k)$ with $n_j \neq n_k$ for $j \neq k$. Now suppose that $r \in [R_{n_k},R_{n_k+1}]$, for some $k \in \N$. It follows from~\eqref{r1} and~\eqref{r3} that
\begin{eqnarray*}
M(r) = f(r) & \leq & r^3 \left(1+\frac{r}{a_k}\right)^{2p_k} \prod_{m=1}^{k-1}\left(1+\frac{r}{a_m}\right)^{2p_m}
\prod_{m \geq k+1} \left(1+\frac{r}{a_m}\right)^{2p_m}\\
& \leq & \left(1+\frac{r}{a_k}\right)^{2p_k} r^{3 + 2p_1 + \cdots + 2p_{k-1}}
\prod_{m \geq k+1} \left(1+\frac{1}{a_m^{1-1/2^{m-k}}}\right)^{a_m^{1/2}}\\
& < & \left(1+\frac{r}{a_k}\right)^{a_k^{\delta_k}} r^{a_{k-1}^{\delta_{k-1}}}e^{1 + 1/2 + 1/4 + \cdots}
\end{eqnarray*}
and so
\begin{equation}\label{M1}
M(r) <  e^2 r^{a_{k-1}^{\delta_{k-1}}}\left(1+\frac{r}{a_k}\right)^{a_k^{\delta_k}}.
\end{equation}

If $r < a_k^{1/2}$, then it follows from~\eqref{del} and~\eqref{M1} that
\[
M(r) < e^3 r^{a_{k-1}^{\delta_{k-1}}} < e^3r^{r^{\delta_k}}
\]
and hence, since $r \geq R_1 \geq 1000$,
\[
 \frac{\log \log M(r) }{\log r} < \frac{\delta_k \log r + 2\log \log r}{\log r} = \delta_k + 2 \frac{\log \log r}{\log r}
 \leq \delta_k + 2 \frac{\log \log R_{n_k}}{\log R_{n_k}}.
\]
It follows from~\eqref{Rn} that, in this case,
\begin{equation}\label{case1}
 \frac{\log \log M(r) }{\log r} \leq \delta_k + 2\frac{\log(3^{n_k} \log 10)}{3^{n_k} \log 10} < \delta_k + \frac{1}{2^{n_k}}.
\end{equation}

If $a_k^{1/2} \leq r \leq a_k$, then
\[
 \left(1+\frac{r}{a_k}\right)^{a_k^{\delta_k}} = \left(1+\frac{r}{a_k}\right)^{(a_k/r)^{\delta_k} r^{\delta_k}} < \left(1+\frac{r}{a_k}\right)^{(a_k/r)r^{\delta_k}} \leq e^{r^{\delta_k}}
\]
and, if $r > a_k$, then
\[
\left(1+\frac{r}{a_k}\right)^{a_k^{\delta_k}} < r^{a_k^{\delta_k}} < r^{r^{\delta_k}}.
\]
So, if $r \geq a_k^{1/2}$, it follows from~\eqref{M1} and~\eqref{r1} that
\[
M(r) < e^2 r^{a_{k-1}^{\delta_{k-1}}} r^{r^{\delta_k}} < e^2 r^{a_{k}^{\delta_k/2}}r^{r^{\delta_k}}  < e^2 r^{2r^{\delta_k}}
\]
and hence
\[
 \frac{\log \log M(r) }{\log r} < \frac{\delta_k \log r + 2\log \log r}{\log r} = \delta_k + 2 \frac{\log \log r}{\log r} \leq \delta_k + 2 \frac{\log \log R_{n_k}}{\log R_{n_k}}.
\]
As before, it follows from~\eqref{Rn} that
\begin{equation}\label{case2}
 \frac{\log \log M(r) }{\log r} \leq \delta_k + \frac{1}{2^{n_k}}.
\end{equation}
Together with~\eqref{case1}, this implies that~\eqref{epsd1} holds.

Now suppose that $r \in [R_{n_k + m}, R_{n_k + m +1})$, for some $k \in \N$, $1 \leq m < n_{k+1} - n_k$. It follows from~\eqref{r1} and~\eqref{Rn} that

\begin{eqnarray*}
M(r) = f(r) & \leq & r^3 \prod_{m=1}^{k}\left(1+\frac{r}{a_m}\right)^{2p_m}
\prod_{m \geq k+1} \left(1+\frac{r}{a_m}\right)^{2p_m}\\
& \leq &  r^{3 + 2p_1 + \cdots + 2p_{k}}
\prod_{m \geq k+1} \left(1+\frac{1}{a_m^{1-1/2^{m-k}}}\right)^{a_m^{1/2}}\\
& \leq & r^{a_{k}^{\delta_{k}}}e^{1 + 1/2 + 1/4 + \cdots}\\
& \leq & e^2 r^{a_{k}^{\delta_{k}}} \leq e^2r^{R_{n_k + 1}^{\delta_k}}\\
& < & e^2 r^{r^{\delta_{k}/3^{m-1}}}\\
\end{eqnarray*}
Thus
\[
 \frac{\log \log M(r) }{\log r} < \frac{\delta_k \log r/3^{m-1} + 2\log \log r}{\log r} < \frac{\delta_k}{3^{m-1}} + 2 \frac{\log \log r}{\log r} \leq \frac{\delta_k}{3^{m-1}} + 2 \frac{\log \log R_{n_k + m}}{\log R_{n_k+m}}.
\]
As before, it follows from~\eqref{Rn} that
\begin{equation}\label{case3}
 \frac{\log \log M(r) }{\log r} \leq \frac{\delta_k}{3^{m-1}} + \frac{1}{2^{n_m + m}}
\end{equation}
and so~\eqref{epsd2} holds.
\end{proof}

\end{document}